\theoremstyle{plain}
\newtheorem{theorem}{Theorem}
\newtheorem{lemma}[theorem]{Lemma}
\newtheorem{proposition}[theorem]{Proposition}
\newtheorem{corollary}[theorem]{Corollary}
\theoremstyle{definition}
\newtheorem{definition}[theorem]{Definition}
\theoremstyle{remark}
\crefname{theorem}{Theorem}{Theorems}
\Crefname{theorem}{Theorem}{Theorems}
\crefname{definition}{Definition}{Definitions}
\Crefname{definition}{Definition}{Definitions}
\crefname{remark}{Remark}{Remarks}
\Crefname{remark}{Remark}{Remarks}
\crefname{algorithm}{Algorithm}{Algorithms}
\Crefname{algorithm}{Algorithm}{Algorithms}
\crefname{proposition}{Proposition}{Propositions}
\Crefname{proposition}{Proposition}{Propositions}
\crefname{lemma}{Lemma}{Lemmas}
\Crefname{lemma}{Lemma}{Lemmas}
\crefname{corollary}{Corollary}{Corollaries}
\Crefname{corollary}{Corollary}{Corollaries}
\newcommand{\RA}{\mathcal{RA}}
\title{Non-uniform Grid Refinement for the Combinatorial Integral Approximation}
\newcommand{\dd}{\,\mathrm{d}}
\newcommand{\N}{\mathbb{N}}
\newcommand{\R}{\mathbb{R}}
\newcommand{\vol}{\operatorname{vol}}
\newcommand{\TT}{\mathcal{T}}
\newcommand{\Ha}{\mathcal{H}}
\newcommand{\weakstarto}{\stackrel{\ast}{\rightharpoonup}}
\newcommand{\Crefpart}[2]{%
  \hyperref[#2]{\namecref{#1}~\labelcref*{#1}.\ref*{#2}}%
}
\newcommand{\Crefparts}[3]{%
  \hyperref[#2]{\namecref{#1}~\labelcref*{#1}.\ref*{#2}--\labelcref*{#1}.\ref*{#3}}%
}
\definecolor{darkgreen}{rgb}{0,0.5,0}
\author[1]{Felix Bestehorn}
\author[2]{Christoph Hansknecht}
\author[1]{Christian Kirches}
\author[3]{Paul Manns}
\affil[1]{Technical University of Braunschweig}
\affil[2]{Technical University of Clausthal}
\affil[3]{TU Dortmund University}
\begin{document}
\maketitle

\begin{abstract}
The combinatorial integral approximation (CIA) is a solution technique
for integer optimal control problems. In order to regularize
the solutions produced by CIA, one can minimize switching costs in one
of its algorithmic steps. This leads to combinatorial optimization
problems, which are called switching cost aware rounding problems (SCARP). They can be
solved efficiently on one-dimensional domains but no efficient solution algorithms
have been found so far for multi-dimensional domains.

The CIA problem formulation depends on a discretization grid. We propose
to reduce the number of variables and thus improve the computational
tractability of SCARP by means of a non-uniform grid refinement strategy.
We prove that the grid refinement preserves the approximation properties
of the combinatorial integral approximation. Computational results 
are offered to show that the proposed approach is able to achieve, within a prescribed time limit, smaller duality gaps that does the uniform approach. For several large instances, a dual bound could only be obtained through adaptivity.
\end{abstract}

\section{Introduction}
We are concerned with a grid refinement technique in the context of mixed-integer
optimal control problems (MIOCPs). MIOCPs provide models for many different applications
from gas network optimization, see, e.g., \cite{martin2006mixed,hante2020mixed},
over traffic light control, see, e.g., \cite{goettlich2017partial},
and energy management of buildings, see, e.g., \cite{zavala2010proactive}, to
automatic control of automotive gear shifts, see, e.g., \cite{gerdts2005solving,kirches2010time}.

The combinatorial integral approximation
\cite{jung2014relaxations,kirches2021compactness}
is an approximation technique that has been designed
for MIOCPs that can be stated in the form
\begin{gather}\label{eq:p}
\inf_{\omega} j(S(\omega)) \text{ s.t.\ }
\left\{\begin{aligned}
&\omega \in L^\infty(\Omega, \R^M),\\
&\omega(s) \in \{0,1\}^M \text{ for almost every (a.e.) }
s \in \Omega,\text{ and}\\ 
&\sum_{m=1}^M \omega_m(s) = 1 \text{ for a.e.\ }
s \in \Omega,
\end{aligned}\right.
\tag{P}
\end{gather}
where $M \in \N$, $T > 0$, $\Omega \subset \R^d$, $d\in \N$,
is a bounded domain, and
$S : L^\infty(\Omega,\R^M) \to L^2(\Omega)$ 
maps weakly-$^*$ converging $L^\infty(\Omega,\R^M)$
to norm converging sequences in $L^2(\Omega)$. 
Moreover, we assume $j \in C(L^2(\Omega),\R)$. The function $j$
is the objective of the MIOCP and $S$ is a
\emph{control-to-state} operator of an underlying ordinary or partial
differential equation (ODE or PDE). The binary-valued
function $\omega$ is the control input of the system.
The constraint $\sum_{m=1}^M \omega_m(s) = 1$ a.e.\ in
$\Omega$ implies that exactly one entry of $\omega$
is set to one at a given point in the domain, while all others
are zero. The constraint may thus be interpreted as a 
\emph{one-hot encoding} of different modes of operation.

We present our theoretical findings in this setting and abstract from the
specific underlying ODE or other dynamical system. We note that ODEs
with Lipschitz continuous right-hand sides or elliptic PDEs
allow to verify the prerequisites on $S$
\cite{manns2021approximation,manns2020improved,manns2020multidimensional}.

The combinatorial integral approximation considers a continuous relaxation
of the problem formulation \eqref{eq:p}. Specifically, the \emph{one-hot encoding}
constraint is relaxed to convex coefficients. The resulting relaxed problem
reads
\begin{gather}\label{eq:r}
\min_{\alpha} j(S(\alpha)) \text{ s.t.\ }
\left\{\begin{aligned}
&\alpha \in L^\infty(\Omega, \R^M),\\
&\alpha(s) \in \{0,1\}^M \text{ for a.e.\ }
s \in \Omega,\text{ and}\\ 
&\sum_{m=1}^M \alpha_m(s) = 1 \text{ for a.e.\ }
s \in \Omega.
\end{aligned}\right.
\tag{R}
\end{gather}
Writing \emph{min} in \eqref{eq:r} and \emph{inf} in \eqref{eq:p} is deliberate because,
in contrast to \eqref{eq:p}, the problem \eqref{eq:r}
always admits a minimizer in our setting,
see, e.g., \cite{kirches2021compactness}.

The compactness assumption that $S$ maps weakly-$^*$ converging sequences
to norm converging sequences has important consequences. In particular,
\begin{gather}\label{eq:cia_identity}
\inf \left\{ j(S(\omega))\,|\, \omega \text{ is feasible for } \eqref{eq:p} \right\}
= \min \left\{ j(S(\alpha))\,|\, \alpha \text{ is feasible for } \eqref{eq:r} \right\},
\end{gather}
which can be considered as a transfer of the
Filippov--Waszewski theorem 
\cite{filippov1962certain,wazewski1963optimal}
and the Lyapunov convexity theorem
\cite{lyapunov1940completely,lindenstrauss1966short}
to MIOCPs \cite{kirches2021compactness}.

So-called \emph{rounding algorithms} allow to
exploit \eqref{eq:cia_identity} algorithmically.
They start from a feasible or optimal solution $\alpha^*$
of \eqref{eq:r}, which is, e.g., obtained with a gradient-based method for 
ODE- or PDE-constrained optimization problems,
and compute a sequence of control functions $(\omega^k)_k$. The  $\omega^k$ 
are feasible for \eqref{eq:p} and converge weakly-$^*$ to $\alpha^*$ in
$L^\infty(\Omega,\R^M)$. If $\alpha^*$ is a minimizer of \eqref{eq:r},
one obtains a minimizing sequence for \eqref{eq:p}
\cite{kirches2021compactness}.
Examples of rounding algorithms are sum-up rounding \cite{sager2005numerical,sager2006numerical,kirches2020approximation,sager2012integer,manns2020multidimensional,manns2021approximation,hante2013relaxation},
next-forced rounding \cite{jung2014relaxations},
(adaptive) maximum dwell rounding
\cite{sager2021mixed}, and switching cost aware rounding
\cite{bestehorn2019switching,bestehorn2020matching,bestehorn2021mixed}.

Rounding algorithms operate on some given $\alpha^*$ and discretizations of $\Omega$
indexed by $k \in \N$, to drive the sequence of error quantities
\begin{gather}\label{eq:error_measure}
d^k(\alpha^*,\omega^k) \coloneqq
\sup \left\{ \left\|\int_{\bigcup_{i=1}^n T_i^k}
\alpha^*(s) - \omega^k(s)\dd s\right\|_\infty \,\middle|\,
n \in \{1,\ldots,N^{k}\}
\right\}
\end{gather}
to zero for $k \to \infty$, where the functions $(d^k)_k$ are a family
of pseudometrics that are induced by a sequence of deliberately
refined grids that partition $\Omega$ into $T_1^k,\ldots,T_{N^k}^k$,
$N^k \in \N$, see
\cite[Definition 4.3]{manns2020multidimensional}.
The $\omega^k$ are cellwise-constant functions on the partition$\{ T_1^k,\ldots, T_{N^k}^k\}$
of $\Omega$. The rounding algorithms generally ensure $d^k(\alpha^*,\omega^k) \le C \vol^k$,
where $C \ge 1$ is a discretization-independent constant and $\vol^k$ is the \emph{grid constant}
defined as $\vol^k \coloneqq \max\left\{ \lambda(T_n^k) \,\middle|\, 
n \in \{1,\ldots,N^k\}\right\}$, where $\lambda$ denotes the Lebesgue
measure on $\R^d$.
The aforementioned refinement implies $\vol^k \to 0$
and thus $d^k(\alpha^*,\omega^k) \to 0$ for $k \to \infty$.

While many rounding algorithms are often much faster than the solution process for \eqref{eq:r},
there are situations, where long runtimes may occur. This is particularly the case for
switching cost aware rounding, which is motivated by the desire to reduce oscillations
in the resulting control functions and where a combinatorial optimization problem is solved for
every partition. For one-dimensional domains $\Omega$, this can be done 
with a shortest path algorithm \cite{bestehorn2021mixed}. While the best known runtime
estimate is in $O(N)$ with respect to $N$, it is also in $O(\sqrt{M}(2\lceil C\rceil + 1)^M)$ with respect
to $M$ \cite[Theorem 6.15]{bestehorn2021combinatorial},
which makes a small number of grid cells $N$ already
desirable if $M$, usually a parameter that cannot be changed, is of medium size, say, $M = 10$.
This effect is amplified if many similar optimization problems need to be solved  and the 
solutions of \eqref{eq:r} can be assumed to be close, for example in 
model predictive control or parameter identification.
Furthermore, switching cost aware rounding is much more costly on
multi-dimensional domains, see, \cite{bestehorn2021switching}, and
efficient combinatorial algorithms that are polynomial in $N$
for a fixed parameter $M$ have not been found so far. Thus the
combinatorial optimization problems arising from switching cost
aware rounding on multi-dimensional domains need to be solved
with black-box integer programming solvers or other 
enumeration algorithms that may have prohibitive runtimes.

We address these practical limitations of switching
cost aware rounding by proposing a non-uniform grid refinement
strategy for use in switching cost aware rounding. In particular,
we aim to reduce the number of variables for enumeration
algorithms so that larger problem instances can be handled in
practice.
\paragraph{Contribution}
We start from the observation that, depending on $\alpha^*$ and the
previously computed $\omega^k$, certain grid cells need not to be refined
in order to improve $d^k(\alpha^*,\omega^k)$.
We propose an algorithm that alternatingly splits grid cells and executes a 
computationally cheap rounding algorithm, which is well-defined for
non-uniform partitions of $\Omega$, until a (non-uniform) partition is found with
which a given desired accuracy in terms of $d^k(\alpha^*,\omega^k)$ can be reached.
Then the more expensive switching cost aware rounding may
be executed on the identified partition and a feasible point is guaranteed.
We prove that the proposed strategy always identifies a grid and a binary-valued
control function that meets the desired accuracy after finitely many iterations,
which in turn leads to the desired weak-$^*$ convergence $\omega^k \weakstarto \alpha^*$.

We define switching cost aware rounding on non-uniform grids as are generated
by the aforementioned algorithm and propose several improvements of the integer
programming formulation.

We apply our insights to a benchmark problem with two-dimensional domain from \cite{bestehorn2021switching}.
Our computational results show that the non-uniform grids and our improvements of the
integer programming formulation often have a beneficial effect in terms of tractability and remaining
duality gaps of the resulting mixed-integer programs.

\paragraph{Structure of the Remainder}
We provide the necessary definitions and notation in 
\Cref{sec:notation}.
We provide and analyze the grid refinement algorithm in
\Cref{sec:non-uniform_grid_refinement}. We present switching cost
aware rounding on non-uniform grids in \Cref{sec:scarp_non-uniform}.
Our computational experiments are summarized in \Cref{sec:computational_experiments}.

\section{Notation and Preparations}\label{sec:notation}
We abbreviate $[n] \coloneqq \{1,\ldots,n\}$ for $n \in \N$.
We write $\mathbbm{1}$ for the vector containing $1$ in
all entries, that is $\mathbbm{1} = (1,\ldots,1)^T$,
in Euclidean spaces. For a set $A$, we denote its
$\{0,1\}$-valued characteristic function by $\chi_A$.
We denote the canonical basis vectors of $\R^d$ for $d\in\N$
by $e_i$ for $i \in [d]$.

We briefly define the required concepts from the combinatorial integral decomposition
but refer to the referenced literature for details.
\begin{definition}[Definition 2.1 in \cite{manns2020multidimensional}]
A function $\alpha \in L^\infty(\Omega, \R^M)$ is a \emph{relaxed control}
if $\alpha(s) \ge 0$ for a.a.\ $s \in \Omega$ and $\sum_{m=1}^M \alpha_m(s) = 1$
for a.a.\ $s \in \Omega$. A relaxed control $\omega$ is a \emph{binary
control} if, in addition, $\omega(s) \in \{0,1\}^M$ for a.a.\ $s \in \Omega$.
\end{definition}
\begin{definition}[Definition 3.1 in \cite{kirches2021compactness}]
An ordered partition $\TT = (T_1,\ldots,T_N)$, $N \in \N$, of $\Omega$
with pairwise disjoint and measurable sets $T_n$ is a \emph{rounding grid}.
The quantity $\vol(\TT) \coloneqq \max_{n \in [N]} \lambda(T_n)$ is 
the \emph{grid constant of $\TT$}.
\end{definition}
\begin{proposition}[Lemma 4.1 in \cite{kirches2021compactness}]\label{prp:pseudometric}
Let $\TT$ be a \emph{rounding grid}. Then the function $d : L^\infty(\Omega,\R^M) \times L^\infty(\Omega,\R^M) \to [0,\infty)$ defined
as
\[ d^\TT(\alpha,\omega) \coloneqq \max_{n \in [N]} \left\|\int_{\bigcup_{i=1}^n T_i} \alpha(s) - \omega(s)\dd s\right\|_\infty \]
for all $\alpha$, $\omega \in L^\infty(\Omega,\R^M)$
is a pseudometric.
\end{proposition}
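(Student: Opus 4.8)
The plan is to verify directly the defining properties of a pseudometric on $L^\infty(\Omega,\R^M) \times L^\infty(\Omega,\R^M)$: non-negativity together with vanishing on the diagonal, symmetry, and the triangle inequality. First I would record that the object is well-defined: since $\Omega$ is bounded and $\alpha,\omega \in L^\infty(\Omega,\R^M)$, the difference $\alpha-\omega$ is integrable over each measurable set $\bigcup_{i=1}^n T_i$, so every integral appearing in the definition is finite, and the maximum over the finite index set $[N]$ is attained; hence $d^\TT$ indeed maps into $[0,\infty)$. Crucially, I would \emph{not} attempt to prove identity of indiscernibles, since that is precisely the property a pseudometric is permitted to violate.

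Non-negativity and vanishing on the diagonal are immediate: $d^\TT(\alpha,\omega)$ is a maximum of $\infty$-norms, each of which is $\ge 0$, and substituting $\omega=\alpha$ makes every integrand vanish, so each term is $0$. For symmetry I would invoke linearity of the Lebesgue integral together with $\|x\|_\infty = \|-x\|_\infty$: for every $n \in [N]$ one has $\int_{\bigcup_{i=1}^n T_i}(\alpha-\omega) = -\int_{\bigcup_{i=1}^n T_i}(\omega-\alpha)$, so the corresponding terms agree entrywise in norm, and therefore so do their maxima.

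The only step requiring care is the triangle inequality, so that is where I expect the (mild) main obstacle to lie. Fixing a third control $\beta \in L^\infty(\Omega,\R^M)$ and an index $n \in [N]$, linearity of the integral gives $\int_{\bigcup_{i=1}^n T_i}(\alpha-\beta) = \int_{\bigcup_{i=1}^n T_i}(\alpha-\omega) + \int_{\bigcup_{i=1}^n T_i}(\omega-\beta)$, and the triangle inequality for $\|\cdot\|_\infty$ bounds the $n$-th term for $(\alpha,\beta)$ by the sum of the two analogous $n$-th terms for $(\alpha,\omega)$ and $(\omega,\beta)$. The delicate point is then passing to the maximum: the index attaining the maximum for $(\alpha,\beta)$ need not attain it for either summand, but subadditivity of the maximum, $\max_n(a_n+b_n) \le \max_n a_n + \max_n b_n$, applies termwise and yields $d^\TT(\alpha,\beta) \le d^\TT(\alpha,\omega) + d^\TT(\omega,\beta)$. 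This closes the argument, and it also makes transparent why $d^\TT$ is only a pseudometric: any two distinct controls whose difference integrates to zero over every prefix union $\bigcup_{i=1}^n T_i$ are at $d^\TT$-distance zero.
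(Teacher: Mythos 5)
Your proof is correct and follows the standard direct verification of the pseudometric axioms (non-negativity, symmetry via $\|x\|_\infty = \|-x\|_\infty$, and the triangle inequality via linearity of the integral plus subadditivity of the maximum), which is essentially the argument behind the cited Lemma 4.1 of \cite{kirches2021compactness}; the paper itself gives no proof and simply defers to that reference. Nothing is missing, and your closing remark about why $d^\TT$ fails identity of indiscernibles is a correct observation.
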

\begin{definition}\label{dfn:ra}
A rounding algorithm $\RA$ is a function that maps a relaxed control and
a rounding grid to a binary control. Formally,
\begin{multline*}
\RA : \{ 
\alpha \in L^\infty(\Omega,\R^M)\,|\,
\alpha \text{ is feasible for }\eqref{eq:r}\}
\times \{ \TT \,|\,
\TT \text{ is a rounding grid }\}\\
\to \{ 
\omega \in L^\infty(\Omega,\R^M)\,|\,
\omega \text{ is feasible for }\eqref{eq:p}\}
\end{multline*}
\end{definition}

\section{Non-uniform Rounding Grid Refinement}\label{sec:non-uniform_grid_refinement}
The main inputs for grid refinement algorithm introduced below are a relaxed control
$\alpha$ and a maximum acceptable error $\Delta > 0$. Then the algorithm 
computes a rounding grid $\TT$ such  that there is a binary control $\omega$, cellwise
constant on the computed rounding grid, which satisfies
\[ d^{\TT}(\alpha,\omega) \le \Delta. \]
Such desired combinations of input and output are formally defined below.
\begin{definition}\label{dfn:admissible_pair}
A tuple $(\alpha, \TT, \Delta)$ of a relaxed control $\alpha$,
a rounding grid $\TT$, and a positive scalar $\Delta > 0$
is \emph{admissible} if
\begin{enumerate}
\item\label{itm:integer_cell_multiples}
      there is $\underline{\vol} > 0$ such that for all $n \in [N]$
      there exists a natural number $k_n \in \N$ with
      $\lambda(T_n) = k_n \underline{\vol}$,
\item \label{itm:feasible_control_for_scarp}
      and there exists a binary control $\omega$
      that is constant on each grid cell
      $T_n$, $n \in \N$,
      such that $d^{\TT}(\alpha, \omega) \le \Delta$.
\end{enumerate}
\end{definition}
\Crefpart{dfn:admissible_pair}{itm:integer_cell_multiples} formalizes that
the volumes of the grid cells are all positive integer
multiples of a positive real scalar. \Crefpart{dfn:admissible_pair}{itm:feasible_control_for_scarp}
formalizes that we seek binary controls that satisfies
the desired estimates. This is the class of
rounding grids to which the efficient combinatorial algorithm
that perform switching cost aware rounding on one-dimensional domains can be generalized,
see \Cref{sec:scarp_non-uniform}.

We continue by generalizing the definition of sequences of refined
rounding grids and prove the desired weak-$^*$ convergence for the computed
binary controls in \Cref{sec:dissections}.
Then we provide the grid refinement algorithm in \Cref{sec:algo} and show that
it produces rounding grids that satisfies the assumptions in \Cref{sec:dissections}.
We prove its asymptotics in \Cref{sec:algo_proof}.

\subsection{$\alpha$-adapted Order-conserving Domain Dissections}
\label{sec:dissections}
The analysis in \cite{manns2020multidimensional,kirches2021compactness} is built
on the grid refinement assumption Definition 4.3 in \cite{manns2020multidimensional},
which requires that all grid cells are refined.
We provide a modified variant of this definition, which allows to omit refinements
if the relaxed control $\alpha$ is already binary-valued on parts of the domain.
\begin{definition}\label{dfn:order_conserving_domain_dissection}
Let $\alpha$ be a relaxed control. A sequence of rounding grids $(\TT^k)_k$ is
an \emph{$\alpha$-adapted order-conserving domain dissection} if
\begin{enumerate}
\item\label{itm:vanishing_max_cell_colume} $\max \left\{ \lambda(T_n^k)\,\middle|\, 
n \in [N^k] \text{ with } \lambda(\{s\,|\,\alpha(s) \notin \{0,1\}^M\} \cap T_n^k) > 0\right\}
\to 0$,
\item\label{itm:decomposition} for all $k \in \N \setminus\{0\}$ and all $n \in [N^{k-1}]$,
there exist $1 \le i \le j \le N^k$ such that
$T_n^{k-1} = \bigcup_{\ell = i}^j T_{\ell}^k$, and
\item\label{itm:regular_shrinkage} the cells $T^k_n$ shrink regularly: there is $C > 0$
such that for each $T^k_n$ there is a ball $B_n^k$
such that $T^k_n \subset B_n^k$ and
$\lambda(T^k_n) \ge C \lambda(B_n^k)$.
\end{enumerate}
\end{definition}
\Crefpart{dfn:order_conserving_domain_dissection}{itm:vanishing_max_cell_colume}
means that
the maximum volume of the grid cells on which $\alpha$ is not already a binary control vanishes over 
the grid iterations. This relaxes Definition 4.3 (2) from \cite{manns2020multidimensional}.
\Crefpart{dfn:order_conserving_domain_dissection}{itm:decomposition} means that a cell is
split into finitely many cells and the ordering of the grid cells from the preceding iterations
is preserved. \Crefpart{dfn:order_conserving_domain_dissection}{itm:regular_shrinkage} constrains
the eccentricity of the grid cells.

In order to be able to apply the arguments in 
\cite{manns2020improved,manns2020multidimensional,kirches2021compactness}
to $\alpha$-adapted order-conserving domain dissections, we  show 
$\omega^k \weakstarto \alpha$ in $L^\infty(\Omega,\R^M)$ if $d^{\TT^k}(\alpha,\omega^{k}) \to 0$ for an $\alpha$-adapted order conserving
domain dissection $(\TT^k)_k$ and a sequence of binary controls $(\omega^k)_k$ that coincide with $\alpha$ on the cells that are excluded in \Crefpart{dfn:order_conserving_domain_dissection}{itm:vanishing_max_cell_colume}.
\begin{lemma}\label{lem:weakstar}
Let $\alpha$ be a relaxed control and $(\TT^k)_k$ an $\alpha$-adapted order-conserving
domain dissection. Let $(\phi^k)_k \subset L^\infty(\Omega,\R^M)$ satisfy
\begin{enumerate}
\item $\phi^k(s) \in [-1,1]^M$ a.e.\ in $\Omega$ for all $k \in \N$,
\item\label{itm:phi_zero_where_alpha_zero} $\phi^k(s) = 0$ a.e.\ in $T^k_i$ if $\alpha(s) = 0$
      a.e.\ in $T^k_i$ for all $i \in [N^k]$, $k \in \N$, and
\item\label{itm:pseudometric_to_zero} $\max \Big\{
    \Big\|\int_{\bigcup_{i=1}^n T_i^k} \phi^k(s)\dd s\Big\|_\infty \,\Big|\,
    n \in [N^k] \Big\} \to 0$ for $k \to \infty$.
\end{enumerate}
Then $\int_{\Omega} \phi^k_i(s)f(s)\dd s \to 0$ for $k \to \infty$
holds for all $f \in L^1(\Omega)$ and all $i \in [M]$.
\end{lemma}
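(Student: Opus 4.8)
The plan is to view the claim as a weak-$^*$ convergence statement and to reduce it, via a uniform boundedness argument, to a dense class of test functions on which the cumulative-integral control of assumption~\ref{itm:pseudometric_to_zero} can be converted into a genuine estimate by telescoping over a fixed coarse grid. First I would record the uniform bound: since $\phi^k(s) \in [-1,1]^M$ a.e., we have $\left|\int_\Omega \phi_i^k(s) f(s)\dd s\right| \le \|f\|_{L^1(\Omega)}$ for every $f$, $i$, and $k$, so the functionals $f \mapsto \int_\Omega \phi_i^k f \dd s$ are equibounded on $L^1(\Omega)$. By a standard $\varepsilon/3$ argument it then suffices to prove convergence for $f$ in a dense subset; I would take $f$ Lipschitz on $\overline{\Omega}$ with constant $L$, which is dense since $\Omega$ is bounded. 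Throughout, fix the component $i \in [M]$ and abbreviate $\delta^k \coloneqq \max_{n \in [N^k]} \big\|\int_{\bigcup_{r=1}^n T_r^k} \phi^k\dd s\big\|_\infty$, so that assumption~\ref{itm:pseudometric_to_zero} reads $\delta^k \to 0$ and, in particular, $\big|\int_{\bigcup_{r=1}^n T_r^k}\phi_i^k\dd s\big| \le \delta^k$ for every $n$.

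The heart of the argument is a two-level decomposition. Fix a coarse level $\ell$ and let $f^\ell$ be piecewise constant, equal on each cell $T_m^\ell$ to the value of $f$ at a fixed point $p_m \in T_m^\ell$, with constant value $c_m$. By \Crefpart{dfn:order_conserving_domain_dissection}{itm:decomposition}, for $k \ge \ell$ each cell of $\TT^k$ lies in a unique cell of $\TT^\ell$, so $f^\ell$ is constant on the cells of $\TT^k$ as well, and I would split
\[ \int_\Omega \phi_i^k f \dd s = \int_\Omega \phi_i^k f^\ell \dd s + \int_\Omega \phi_i^k (f - f^\ell) \dd s, \]
choosing $\ell$ first and $k \ge \ell$ afterwards. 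For the second term, the decisive point is that $\phi_i^k$ vanishes on the cells that \Crefpart{dfn:order_conserving_domain_dissection}{itm:vanishing_max_cell_colume} does not force to shrink: by assumption~\ref{itm:phi_zero_where_alpha_zero}, $\phi^k = 0$ on the cells on which $\alpha$ is already binary-valued, so only the cells on which $\alpha$ is non-binary contribute. Every such fine cell lies (by \Crefpart{dfn:order_conserving_domain_dissection}{itm:decomposition}) inside a non-binary cell of $\TT^\ell$, whose volume tends to $0$ as $\ell \to \infty$ and whose diameter therefore also tends to $0$ by the regular-shrinkage property \Crefpart{dfn:order_conserving_domain_dissection}{itm:regular_shrinkage}; Lipschitz continuity then yields $|f - f^\ell| \le L\,\eta(\ell)$ on these cells, with $\eta(\ell) \to 0$. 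Thus the second term is at most $L\,\eta(\ell)\,\lambda(\Omega)$, uniformly in $k$, and I fix $\ell$ so large that this is below $\varepsilon/2$.

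For the first term I would use order preservation. Iterating \Crefpart{dfn:order_conserving_domain_dissection}{itm:decomposition} shows that, for every $k \ge \ell$, each coarse cell $T_m^\ell$ is the union of a \emph{contiguous} block $T_{a_m}^k, \ldots, T_{b_m}^k$ of cells of $\TT^k$ in the grid ordering. Consequently
\[ \int_{T_m^\ell} \phi_i^k \dd s = \int_{\bigcup_{r=1}^{b_m} T_r^k}\phi_i^k\dd s - \int_{\bigcup_{r=1}^{a_m-1} T_r^k}\phi_i^k\dd s \]
is a difference of two cumulative integrals, so $\big|\int_{T_m^\ell}\phi_i^k\dd s\big| \le 2\delta^k$. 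Summing over the $N^\ell$ coarse cells and using $|c_m| \le \|f\|_\infty$,
\[ \left| \int_\Omega \phi_i^k f^\ell \dd s \right| = \left|\sum_{m=1}^{N^\ell} c_m \int_{T_m^\ell}\phi_i^k \dd s\right| \le \|f\|_{\infty}\, N^\ell\, 2\delta^k. \]
With $\ell$, and hence $N^\ell$, now fixed, $\delta^k \to 0$ lets me pick $K$ so that this term is below $\varepsilon/2$ for all $k \ge K$, closing the estimate $\big|\int_\Omega \phi_i^k f\dd s\big| < \varepsilon$ and hence the lemma.

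I expect the main obstacle to be exactly what forces the detour through a fixed coarse level: the grid ordering is not spatially local, so consecutive cells $T_n^k, T_{n+1}^k$ may be far apart, and a naive summation by parts carried out directly at level $k$ produces a total-variation factor $\sum_n |c_n - c_{n+1}|$ (with $c_n$ the value of $f$ on $T_n^k$) that need not remain bounded as the grid is refined. Routing the estimate through the $N^\ell$ coarse cells---finitely many, each a contiguous block---replaces this uncontrolled factor by the fixed count $N^\ell$, while assumption~\ref{itm:phi_zero_where_alpha_zero} together with \Crefpart{dfn:order_conserving_domain_dissection}{itm:vanishing_max_cell_colume} and \Crefpart{dfn:order_conserving_domain_dissection}{itm:regular_shrinkage} guarantees that the cells genuinely carrying $\phi_i^k$ shrink in diameter, so the coarse approximation $f^\ell$ is accurate precisely where it is needed.
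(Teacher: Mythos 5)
Your argument is correct, and its engine is the same as the paper's: fix a coarse grid, write $\int_{T_m^\ell}\phi_i^k\dd s$ as a difference of two cumulative integrals using the order-conserving decomposition \Crefpart{dfn:order_conserving_domain_dissection}{itm:decomposition}, bound each by the quantity in assumption~\ref{itm:pseudometric_to_zero}, sum over the finitely many coarse cells, and use assumption~\ref{itm:phi_zero_where_alpha_zero} to annihilate the contribution of cells that the dissection never forces to shrink. Where you genuinely diverge is the step that reduces $f$ to something piecewise constant on the coarse grid. The paper first replaces $f$ by a simple function $\tilde f$ and then by its cell averages $\tilde f^{k_0}$, invoking Lebesgue's differentiation theorem (justified via \Crefpart{dfn:order_conserving_domain_dissection}{itm:regular_shrinkage}) together with dominated convergence to make $\|\tilde f-\tilde f^{k_0}\|_{L^1(\Omega)}$ small. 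You instead pass to Lipschitz $f$ by density and convert the vanishing cell volumes of \Crefpart{dfn:order_conserving_domain_dissection}{itm:vanishing_max_cell_colume} into a diameter bound via regular shrinkage, so that $|f-f^\ell|$ is small precisely on the cells where $\phi_i^k$ can be nonzero. This is more elementary (no differentiation theorem) and, more importantly, it only requires the approximation error to be controlled on the shrinking cells; the paper's route needs $\tilde f^{k}\to\tilde f$ in $L^1(\Omega)$ globally, including on cells that are never refined but on which $\alpha$ is binary-valued without vanishing, where the averaged $\tilde f^{k}$ need not converge to $\tilde f$ --- a delicacy your localization sidesteps. One caveat applies to your write-up (and implicitly to the paper's): you use assumption~\ref{itm:phi_zero_where_alpha_zero} in the form ``$\phi^k$ vanishes on cells on which $\alpha$ is binary-valued,'' which is what \Cref{cor:weakstar} supplies and what the estimate of the second term requires; the hypothesis as printed only covers cells with $\alpha=0$, under which reading cells with $\alpha\equiv e_m$ carry no hypothesis and are not forced to shrink, so the estimate of $\int_\Omega\phi_i^k(f-f^\ell)\dd s$ would not close. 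You should state explicitly that you work with the stronger (intended) reading.
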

\begin{proof}
We adapt the proof of \cite[Lemma 4.4]{manns2020multidimensional} and start by fixing
$i \in \{1,\ldots,M\}$
and writing $\phi^k$ instead of $\phi^k_i$ to reduce notational bloat. As in \cite[Lemma 4.4]{manns2020multidimensional},
we observe that the functions $\phi^k f$ are integrable and to restrict to $f \ge 0$ a.e.

Let $\varepsilon > 0$. As in\cite[Lemma 4.4]{manns2020multidimensional}, there exists a
non-negative simple function such that for all $k \in \N$:
\begin{gather}\label{eq:simple_function_approx}
0 \le \tilde{f}(s) \le f(s) \text{ a.e.\ and }
\left|\int_{\Omega} \phi^k(s)f(s)\dd s\right| \le
\left|\int_{\Omega}\phi^k(s) \tilde{f}(s)\dd s\right| + \frac{\varepsilon}{3}.
\end{gather}

For $k \in \N$ we define the function $\tilde{f}^k$ for $s \in \Omega$ as
\[ \tilde{f}^k(s) \coloneqq \sum_{i=1}^{N^k} \tilde{f}^{k}_i(s)
\text{ with }
\tilde{f}^k_i(s) \coloneqq \left\{\begin{aligned}
\tilde{f}(s) & \text{ if } \alpha = 0 \text{ a.e.\ in } T_i^k,\\
\frac{\chi_{T_i^k}(s)}{\lambda(T^k_i)}\int_{T^k_i} \tilde{f}(\sigma) \dd \sigma & \text{ else}
\end{aligned}\right.
\]
Then $\tilde{f}^k \to \tilde{f}$ pointwise a.e.\ by virtue of Lebesgue's differentiation theorem,
which may be implied because we impose the regular shrinkage condition on the grid cells of
$\alpha$-adapted order-conserving domain dissections that are refined infinitely often
in \Crefpart{dfn:order_conserving_domain_dissection}{itm:regular_shrinkage} and
$\tilde{f}^k$ coincides with $\tilde{f}$ on cells that are not refined infinitely often. Moreover,
$0 \le \tilde{f}^k(s) \le \|\tilde{f}\|_{L^\infty(\Omega)}$ a.e.\ by construction through averaging and $\|\tilde{f}\|_{L^\infty(\Omega)} < \infty$
because $\tilde{f}$ is a simple function. Thus Lebesgue's dominated convergence theorem implies
$\tilde{f}^k \to \tilde{f}$ in $L^1(\Omega)$, implying that there is $k_0 \in \N$ such that
\begin{gather}\label{eq:lebesgue_differentiation_approx}
    \|\tilde{f} - \tilde{f}^{k_0}\|_{L^1(\Omega)} \le \frac{\varepsilon}{3}.
\end{gather}

Let $k \in \N$. Then we obtain
\[ \left|\int_{\Omega} \phi^k(s) f(s)\dd s\right|
\underset{\mathclap{\eqref{eq:simple_function_approx}}}\le
\left|\int_{\Omega}\phi^k(s) \tilde{f}(s)\dd s\right| + \frac{\varepsilon}{3}
\le \left|\int_{\Omega}\phi^k(s) \tilde{f}^{k_0}(s)\dd s\right|
+ \frac{2\varepsilon}{3},
\]
where the second inequality follows from the triangle inequality and
H\"{o}lder's inequality with $\|\phi^k\|_{L^\infty(\Omega)} \le 1$
and the estimate \eqref{eq:lebesgue_differentiation_approx}.
It remains to prove that
\[ \left|\int_{\Omega}\phi^k(s) \tilde{f}^{k_0}(s)\dd s\right| < \frac{\varepsilon}{3} \]
for all $k \ge k_1$ for some $k_1 \in \N$ large enough. We prove this assertion
by showing $\int_{T_i^{k_0}} \phi^k(s) \tilde{f}^{k_0}(s)\dd s \to 0$
for $k \to \infty$ for all $i \in [N^{k_0}]$.
To this end, we distinguish two cases on $T_i^{k_0}$.

\textbf{Case $\alpha(s) = 0$ a.e.\ in $T_i^{k_0}$:} Then
$\phi^{k_0}(s) = 0$ a.e.\ in $T_i^{k_0}$ because of assumption \ref{itm:phi_zero_where_alpha_zero}.
Combining this with the recursive refinement principle from 
\Crefpart{dfn:order_conserving_domain_dissection}{itm:decomposition}, we obtain
$\phi^{k}(s) = 0$ a.e.\ in $T_i^{k_0}$ for all $k \ge k_0$. Consequently,
$\int_{T_i^{k_0}} \phi^k(s) \tilde{f}^{k_0}(s)\dd s = 0$
if $\alpha(s) = 0$ a.e.\ in $T_i^{k_0}$.

\textbf{Case $\lambda(\{s\,|\,\alpha(s) > 0\} \cap T_i^{k_0}) > 0$:} 
Then
\begin{gather}\label{eq:split_integral}
\begin{aligned}
\int_{T_i^{k_0}} \phi^k(s) \tilde{f}^{k_0}(s)\dd s
&= \tilde{f}^{k_0}_i \int_{T_i^{k_0}} \phi^k(s)\dd s\\
&= \tilde{f}^{k_0}_i \left(\int_{\bigcup_{j=1}^i T_j^{k_0}} \phi^k(s)\dd s
- \int_{\bigcup_{j=1}^{i-1} T_j^{k_0}} \phi^k(s)\dd s
\right),
\end{aligned}
\end{gather}
where the first identity follows by construction of $\tilde{f}^{k_0}$.
\Crefpart{dfn:order_conserving_domain_dissection}{itm:decomposition} gives that
for all $k \ge k_0$ there exist indices $a(i,k)$, $b(i,k) \in [N^k]$ such that
$T_i^{k_0} = \bigcup_{\ell=a(i,k)}^{b(i,k)} T_\ell^k$, which implies
\[ \int_{T_i^{k_0}} \phi^k(s) \tilde{f}^{k_0}(s)\dd s
= \tilde{f}^{k_0}_i \left(\int_{\bigcup_{j=1}^{b(i,k)} T_j^{k}} \phi^k(s)\dd s
- \int_{\bigcup_{j=1}^{a(i,k)} T_j^{k}} \phi^k(s)\dd s
\right)
\]
when inserted into \eqref{eq:split_integral}.
Assumption \ref{itm:pseudometric_to_zero} implies
$\int_{\bigcup_{j=1}^{b(i,k)} T_j^{k}} \phi^k \to 0$
and $\int_{\bigcup_{j=1}^{a(i,k)} T_j^{k}} \phi^k \to 0$,
which in turn gives 
implying $\int_{T_i^{k_0}} \phi^k) \tilde{f}^{k_0} \to 0$ as desired.

Because $k_0$ is fixed now, an $\tfrac{\varepsilon}{3 N^{k_0}}$ argument
yields the claim $\left|\int_{\Omega} \phi^k \tilde{f}^{k_0}\right| < \varepsilon$.
This closes the proof.
\end{proof}
\begin{corollary}\label{cor:weakstar}
Let $\alpha$ be a relaxed control.
Let $d^{\TT^k}(\alpha,\omega^{k}) \to 0$ for an $\alpha$-adapted order conserving
domain dissection $(\TT^k)_k$ and a sequence of binary controls $(\omega^k)_k$ that
coincide with $\alpha$ on the cells that are excluded in 
\Crefpart{dfn:order_conserving_domain_dissection}{itm:vanishing_max_cell_colume}.
Then $\omega^k \weakstarto \alpha$.
\end{corollary}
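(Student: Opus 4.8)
The plan is to deduce the statement as a direct application of \Cref{lem:weakstar} to the difference $\phi^k \coloneqq \omega^k - \alpha$. First I would recall what is to be shown: weak-$^*$ convergence $\omega^k \weakstarto \alpha$ in $L^\infty(\Omega,\R^M)$ means $\int_\Omega \langle \omega^k(s) - \alpha(s), g(s)\rangle \dd s \to 0$ for every $g \in L^1(\Omega,\R^M)$. Since $M$ is finite, every such $g$ splits into its scalar components $g_i \in L^1(\Omega)$ and the pairing becomes $\sum_{i=1}^M \int_\Omega (\omega^k_i - \alpha_i) g_i \dd s$; it therefore suffices to prove $\int_\Omega \phi^k_i(s) f(s)\dd s \to 0$ for each fixed $i \in [M]$ and each $f \in L^1(\Omega)$. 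This scalar, componentwise statement is precisely the conclusion of \Cref{lem:weakstar}, so the whole corollary reduces to checking that $\phi^k = \omega^k - \alpha$ satisfies the three hypotheses of that lemma.

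Next I would dispatch the two easy hypotheses. Hypothesis~1 (pointwise range in $[-1,1]^M$) is immediate: $\omega^k$ is a binary control, so $\omega^k(s) \in \{0,1\}^M$ a.e., while $\alpha$ is a relaxed control, so $\alpha(s) \ge 0$ with $\sum_m \alpha_m(s) = 1$ and hence $\alpha(s) \in [0,1]^M$ a.e.; subtracting gives $\phi^k(s) \in [-1,1]^M$ a.e. Hypothesis~3 is nothing but the standing assumption $d^{\TT^k}(\alpha,\omega^k) \to 0$: by the definition of the pseudometric in \Cref{prp:pseudometric}, the quantity $\max\{\|\int_{\bigcup_{i=1}^n T_i^k}\phi^k(s)\dd s\|_\infty \mid n \in [N^k]\}$ is exactly $d^{\TT^k}(\alpha,\omega^k)$ and therefore tends to zero.

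The one step that genuinely consumes the extra structure of the corollary is Hypothesis~2, which requires $\phi^k$ to vanish on the cells where $\alpha$ is trivial. Here I would invoke the assumption that $\omega^k$ coincides with $\alpha$ on exactly the cells excluded in \Crefpart{dfn:order_conserving_domain_dissection}{itm:vanishing_max_cell_colume}, that is, on those $T^k_i$ where $\alpha$ is already binary-valued a.e. On any such cell we have $\phi^k = \omega^k - \alpha = 0$ a.e.\ as full vectors (both on components of the binary $\alpha$ equal to $1$ and on those equal to $0$), which is stronger than, and hence delivers, Hypothesis~2. With all three hypotheses verified, \Cref{lem:weakstar} yields the scalar limits $\int_\Omega \phi^k_i f \dd s \to 0$, and summing over the finitely many components from the first step gives $\omega^k \weakstarto \alpha$.

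Because the substantive analytic work — the Lebesgue-differentiation argument and the telescoping of partial integrals against the pseudometric — has already been carried out inside \Cref{lem:weakstar}, I do not expect a serious obstacle at the level of the corollary. The only points requiring care are the bookkeeping that matches the corollary's ``coincidence on excluded cells'' hypothesis to Hypothesis~2 of the lemma, and the elementary reduction of vector-valued weak-$^*$ convergence in $L^\infty(\Omega,\R^M)$ to scalar testing against $L^1(\Omega)$ component by component.
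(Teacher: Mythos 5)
Your proposal is correct and is exactly the paper's argument: the paper's proof of \Cref{cor:weakstar} is the single line ``apply \Cref{lem:weakstar} with $\phi^k \coloneqq \alpha - \omega^k$,'' and you carry out precisely that reduction (with the immaterial sign convention $\phi^k = \omega^k - \alpha$), merely making explicit the verification of the lemma's three hypotheses and the componentwise reduction of weak-$^*$ convergence that the paper leaves implicit.
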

\begin{proof}
This follows with the choice $\phi^k \coloneqq \alpha - \omega^k$ for $k \in \N$.
\end{proof}

\subsection{Algorithm Statement and Description}\label{sec:algo}

The grid refinement algorithm is stated formally
in \Cref{alg:adaptive_grid_refinement}. It has five inputs:
(1) a relaxed control $\alpha$, (2) a tolerance for the difference between
$\alpha$ and its binary control approximation $\omega$ in the pseudometrics
$d^\TT$, (4) an initial rounding grid $\TT^0$ discretizing the domain $\Omega$,
a constant $C > 0$ so that the grid cells in $\TT^0$ satisfy the regularity condition
\Crefpart{dfn:order_conserving_domain_dissection}{itm:regular_shrinkage} with $C$,
and (5) a rounding algorithm $\RA$.

\Cref{alg:adaptive_grid_refinement} proceeds as follows. In \Cref{ln:condition}, it
executes the rounding algorithm $\RA$ on $\alpha$ and the current rounding grid $\TT^k$.
Then the distance between $\alpha$ and the output of $\RA$ is measured with $d^{\TT^k}$.
When the distance falls below the tolerance $\Delta$, the algorithm
terminates and returns $\TT^k$ because a binary control that is
close enough to $\alpha$ with respect to $d^{\TT^k}$ has been found.
If the termination test in \Cref{ln:condition} is not successful, the algorithm
refines the rounding grid: it identifies a grid cell $n^* \in [N^k]$ that
maximizes a measure of the \emph{non-binariness} of the relaxed control $\alpha$
weighted by the volume of the grid cell in \Cref{ln:identify_most_fractional_cell},
where the $\min\{\cdot,\cdot\}$ is understood componentwise. Then the identified
grid cell is split into finitely many cells, each having a volume less than half of the
split cell. We note that the $\arg \max$ in \Cref{ln:identify_most_fractional_cell}
may not be unique and we assume that an arbitrary choice is made in this case.

\begin{algorithm}[h]
\caption{Rounding Grid Refinement}\label{alg:adaptive_grid_refinement}
\SetAlgoLined
\KwData{Relaxed control $\alpha$,
tolerance $\Delta \ge 0$, initial rounding grid $\TT^0$ 
with $N^0$ grid cells of $\Omega$, $C > 0$
such that the grid cells $T_n^0$, $n \in [N^0]$, in $\TT^0$ satisfy
\Crefpart{dfn:admissible_pair}{itm:integer_cell_multiples} and
\Crefpart{dfn:order_conserving_domain_dissection}{itm:regular_shrinkage}.}
\KwData{Rounding algorithm $\RA$.}
$k\gets 0$\\
\label{ln:condition}
\While{$d^{\TT^k}\left(\alpha,\RA(\alpha, \TT^k)\right) > \Delta$}{
  $n^* \gets \arg \max\left\{
  \left\|\min \left\{ \int_{T_n^k} \alpha(s)\dd s,
  \int_{T_n^k} \mathbbm{1} - \alpha(s)\dd s
  \right\}
  \right\|_\infty
  \,\Big|\, n \in [N^k]
  \right\}$\label{ln:identify_most_fractional_cell}\\
  Compute a disjoint partition $\hat{T}_1^{k} \cup \cdots \cup \hat{T}^k_{\ell^{k}} = T^k_{n^*}$, 
  $\ell^k \in \N \setminus \{0,1\}$, of measurable sets $\hat{T}_j^{k}$ such that there are balls 
  $B_j^k$ satisfying $\hat{T}_j^k \subset B_j^k$ and $\lambda(\hat{T}^k_j) \ge C \lambda(B^k_j)$
  and $\lambda(\hat{T}^k_j) = \frac{1}{a_j} \lambda(T^k_{n^*})$ with $a_j \in \N \setminus \{0,1\}$
  \label{ln:fractional_cell_split}\\
  $\TT^{k+1} \gets \left(T_1^k,\ldots,T_{n^* - 1}^k,\hat{T}_1^{k},\ldots,\hat{T}^k_{\ell^{k}},
  T_{n^* + 1}^k,\ldots,T_{N^k}\right)$\\
  $N^{k+1} \gets N^k + \ell^k - 1$\\
  $k\gets k + 1$
}
\KwResult{Final rounding grid $\TT^k$ of $\TT$.}
\end{algorithm}

We provide some details on the identification of $n^* \in [N^k]$ in 
\Cref{ln:identify_most_fractional_cell}. We seek binary controls
$\omega$ that coincide with $\alpha$ in cells $\TT^k_n$, where $\alpha$ is
already a binary control. In this case, we obtain for each $m \in [M]$ that
\[ \frac{1}{\lambda(\TT^k_n)}\left|\int_{\TT^k_n} \alpha_m(s) - \omega_m(s)\dd s\right|
 = \frac{1}{\lambda(\TT^k_n)}\left\{ 
\begin{aligned}
\int_{\TT^k_n} \alpha_m(s)\dd s & \text{ if } \omega_m = 0 \text{ a.e.\ in } \TT^k_n,\\
\int_{\TT^k_n} 1 - \alpha_m(s)\dd s & \text{ else.}
\end{aligned}
\right.
\]
Thus the minimum of these two values may be used as an indicator on how close the $m$-th
component of $\alpha$ is to a cell-wise constant binary control on $\TT^k_n$.
Weighting this indicator with the volume of $\TT^k_n$, we obtain
$\min\big\{\int_{\TT^k_n}\alpha_m(s)\dd s, \int_{\TT^k_n} 1-\alpha_m(s)\dd s\big\}$
as a measure of non-binariness to assess the contribution to the $m$-th component on the
interval $\TT^k_n$ in
the integral $\int_{\bigcup_{i=1}^j T^k_i} \alpha(s) - \omega(s)\dd s$ for $j \ge n$.
As in the definition of $d$ we take the $\max$-norm of the resulting vector
$\min\big\{ \int_{\TT^k_n} \alpha(s)\dd s, \int_{\TT^k_n} \mathbbm{1} - \alpha(s)\dd s\big\}$
in order to obtain a scalar-valued measure of the non-binariness for
each grid cell.

Before we prove that \Cref{alg:adaptive_grid_refinement} produces
a rounding grid $\TT^k$ after finitely many iterations such that
the tuple $(\alpha,\TT^k,\Delta)$ is admissible for given $\alpha$ and $\Delta$,
we show that the produced sequence of rounding grids
is well defined: it complies with 
\Crefpart{dfn:admissible_pair}{itm:integer_cell_multiples} and
\Crefparts{dfn:order_conserving_domain_dissection}{itm:decomposition}{itm:regular_shrinkage} in all iterations.
\begin{lemma}\label{lem:rounding_grids_well_defined}
Let $\alpha$ be a relaxed control. Let $\Delta \ge 0$.
Let $\TT^0$ be a rounding grid with $C > 0$ such that all
grid cells $T_n^0$, $n \in [N^0]$, satisfy
\Crefpart{dfn:admissible_pair}{itm:integer_cell_multiples} and
\Crefpart{dfn:order_conserving_domain_dissection}{itm:regular_shrinkage}.
Let $\RA$ be a rounding algorithm.
Then \Cref{alg:adaptive_grid_refinement} produces a, possibly infinite,
sequence of rounding grids $\TT^k$ that satisfy
\Crefpart{dfn:admissible_pair}{itm:integer_cell_multiples} and
\Crefparts{dfn:order_conserving_domain_dissection}{itm:decomposition}{itm:regular_shrinkage} for all $k \in \N$.
\end{lemma}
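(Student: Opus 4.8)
The plan is to argue by induction on the iteration counter $k$, showing that each grid update in \Cref{alg:adaptive_grid_refinement} preserves all three properties. The base case $k = 0$ is immediate: $\TT^0$ satisfies \Crefpart{dfn:admissible_pair}{itm:integer_cell_multiples} and \Crefpart{dfn:order_conserving_domain_dissection}{itm:regular_shrinkage} by assumption, while \Crefpart{dfn:order_conserving_domain_dissection}{itm:decomposition} is vacuous for $k = 0$ because it only constrains grids of positive index. For the inductive step I assume $\TT^k$ has the three properties and that the loop does not terminate, so that $\TT^{k+1}$ is formed by replacing the cell $T_{n^*}^k$ with the sub-cells $\hat{T}_1^k, \ldots, \hat{T}_{\ell^k}^k$ produced in \Cref{ln:fractional_cell_split}. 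Since these sub-cells are measurable, pairwise disjoint, and union to $T_{n^*}^k$, the tuple $\TT^{k+1}$ is again a rounding grid.

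The regular-shrinkage and decomposition properties then follow directly from the construction. Every cell of $\TT^{k+1}$ other than the new sub-cells coincides with a cell of $\TT^k$ and inherits its bounding ball from the inductive hypothesis, whereas each $\hat{T}_j^k$ is explicitly equipped in \Cref{ln:fractional_cell_split} with a ball $B_j^k \supset \hat{T}_j^k$ obeying $\lambda(\hat{T}_j^k) \ge C \lambda(B_j^k)$; hence \Crefpart{dfn:order_conserving_domain_dissection}{itm:regular_shrinkage} holds with the same $C$. For \Crefpart{dfn:order_conserving_domain_dissection}{itm:decomposition} I read off the ordering in the update: cells with index below $n^*$ are unchanged, cells with index above $n^*$ are merely shifted by $\ell^k - 1$, and $T_{n^*}^k = \bigcup_{\ell = n^*}^{n^* + \ell^k - 1} T_\ell^{k+1}$ is a union of consecutive cells, which is precisely what is required.

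The integer-cell-multiples property is the step that requires the key idea. By the inductive hypothesis there is $\underline{\vol}^k > 0$ and natural numbers $k_n$ with $\lambda(T_n^k) = k_n \underline{\vol}^k$ for all $n \in [N^k]$. The new sub-cells satisfy $\lambda(\hat{T}_j^k) = \tfrac{1}{a_j} \lambda(T_{n^*}^k) = \tfrac{k_{n^*}}{a_j} \underline{\vol}^k$ with $a_j \in \N \setminus \{0,1\}$, so they need not be integer multiples of $\underline{\vol}^k$ and a finer common base is required. Setting $A \coloneqq \operatorname{lcm}(a_1, \ldots, a_{\ell^k})$ and $\underline{\vol}^{k+1} \coloneqq \underline{\vol}^k / A$, the unchanged cells satisfy $\lambda(T_n^k) = (k_n A) \underline{\vol}^{k+1}$ and each new cell satisfies $\lambda(\hat{T}_j^k) = \big(k_{n^*} \tfrac{A}{a_j}\big) \underline{\vol}^{k+1}$, where $A / a_j \in \N$ because $a_j \mid A$. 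Thus all cell volumes of $\TT^{k+1}$ are positive integer multiples of $\underline{\vol}^{k+1}$, which establishes \Crefpart{dfn:admissible_pair}{itm:integer_cell_multiples} for $\TT^{k+1}$ and closes the induction. The only genuine obstacle is this bookkeeping of a common base volume across iterations; the remaining two properties are inherited essentially verbatim from the construction in \Cref{ln:fractional_cell_split}.
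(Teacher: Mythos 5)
Your proof is correct and follows essentially the same inductive argument as the paper: the base case by assumption, the ordering and regular-shrinkage properties read off directly from \Cref{ln:fractional_cell_split}, and the integer-multiples property by rescaling the common base volume. The only (immaterial) difference is that you divide $\underline{\vol}$ by $\operatorname{lcm}(a_1,\ldots,a_{\ell^k})$ whereas the paper divides by the full product $\prod_{j=1}^{\ell^k} a_j$; both yield valid integer multipliers.
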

\begin{proof}
The claim is shown inductively. The base case holds by assumption.
We assume that the claim holds for $\TT^k$, $k \in \N$.
In iteration $k + 1$, there is one grid cell $T^k_{n^*}$ that is split into
$\ell^k \ge 2$  grid cells that are inserted into $\TT^k$ between $T^k_{n^*-1}$ and $T^k_{n^*+1}$.
This implies that the recursive ordering property \Crefpart{dfn:order_conserving_domain_dissection}{itm:decomposition} holds for grid cell $n^*$ from iteration $k$.
For $n < n^*$, we obtain $T^{k}_n = T^{k+1}_n$ and for $n > n^*$, we obtain
$T^{k}_n = T^{k+1}_{n + \ell^k}$. 
By virtue of \Cref{alg:adaptive_grid_refinement} \Cref{ln:fractional_cell_split},
\Crefpart{dfn:order_conserving_domain_dissection}{itm:regular_shrinkage}
is also satisfied for the newly added cells with the same constant $C > 0$ (the
input of the algorithm).
\Cref{alg:adaptive_grid_refinement} \Cref{ln:fractional_cell_split} also yields
$\lambda(T^{k+1}_{n^* - 1 + j}) = \frac{1}{a_j} \lambda(T^k_{n^*})$ with $a_j \in \N \setminus \{0,1\}$
for the newly added cells $T^{k+1}_{n^* - 1 + j}$, $j \in [\ell^k]$.
By induction we have $\lambda(T^k_{n}) = k_n \underline{\vol}$ holds for some $\underline{\vol} > 0$
and $\{k_n\,|\,n \in [N^k]\} \subset \N$. 
We set $\underline{\vol}^* \coloneqq \underline{\vol} / \Pi_{j=1}^{\ell^k} a_j$,
$k_n^* \coloneqq k_n \Pi_{j=1}^{\ell^k} a_j$ for all $n \in [n^*]$ and all
$n \in [N^{k+1}]\setminus [n^* + \ell^k - 1]$, which implies
$k_n^* \underline{\vol}^* = k_n \underline{\vol} = \lambda(T^{k+1}_{n})$.
Furthermore, we set $k_n^* \coloneqq k_{n^*} \Pi_{j=1, j \neq n - (n^* - 1)}^{\ell^k} a_j$
for all $n \in \{n^*,\ldots,n^* + \ell^k - 1\}$, which implies
$k_n^* \underline{\vol}^* = k_n \underline{\vol} / a_{n - (n^* - 1)} = \lambda(T^{k+1}_{n})$.
In total, this construction gives $k_n^* \in \N$
and $k_n^* \underline{\vol}^* = \lambda(T^{k+1}_{n})$ for all $n \in [N^{k+1}]$, which
implies \Crefpart{dfn:admissible_pair}{itm:integer_cell_multiples}.
This completes the induction step and hence the proof.
\end{proof}

\subsection{Proof of Asymptotics of \Cref{alg:adaptive_grid_refinement}}\label{sec:algo_proof}

We prove that the input and outputs of 
\Cref{alg:adaptive_grid_refinement} are admissible
in the sense of \Cref{dfn:admissible_pair}
for the case that a variant of the sum-up rounding algorithm 
from \cite{manns2021approximation}
is chosen for $\RA$.

We introduce the sum-up rounding variant briefly
as \Cref{alg:sur} below.
\Cref{alg:sur} computes a binary control $\omega$ that is cell-wise constant on the
$T_n$. It iterates over the $T_n$ from $n = 1$ to $N$ and computes vectors
$\hat{\omega}_n \in \R^M$, which are then the values of $\omega$ the $T_n$.
In each iteration $n$ (grid cell $T_n$), \Cref{alg:sur} computes
the sum of the integral over the difference between
the relaxed control $\alpha$ and $\omega$ from $T_0$ to $T_{n-1}$ plus the integral over
alpha over the current grid cell $T_n$. If $\alpha_i(s) = 1$ for a.a.\ $s \in T_n$ and some
$i \in [M]$ (there is at most one), then $\omega_i$ is set to $1$ on $T_n$. Else, a maximizing
index $i$, which can be chosen arbitrary in case of non-uniqueness, of this vector
$\gamma_n \in \R^M$ is selected. Then $\omega_i$ is set to $1$ on $T_n$ and $\omega_j$ is set
to zero on $T_n$ for $j\neq i$ by setting $\hat{\omega}_n \gets e_i$, thereby ensuring
that the resulting function $\omega$ is a binary control.
The algorithm differs from the standard version of sum-up rounding
in that $\omega$ is always set to $\alpha$ on intervals, where
$\alpha$ is already a.e.\ binary-valued.

\begin{algorithm}[h]
\caption{Variant of the Sum-Up Rounding Algorithm}\label{alg:sur}
\SetAlgoLined
\KwData{Relaxed control $\alpha$, rounding grid $\TT = (T_1,\ldots,T_N)$ of $\Omega$.}
$\phi_0 \gets 0_{\R^M}$\\
\For{$n = 1,\ldots,N$}{
    $\gamma_{n} \gets \phi_{n-1} + \int_{T_n} \alpha(s)\dd s$\\
    $i \gets \left\{
    \begin{aligned}
    & j \text{ if there exists } j \text{ such that } \int_{T_n} \alpha_j(s)\dd s = \lambda(T_n),\\
    & \arg \max \{ \gamma_{n,j}\,|\, j \in [M]\} \text{ else}.
    \end{aligned}\right.$\\
    $\hat{\omega}_{n} \gets e_i$\\
    $\phi_{n} \gets \gamma_n - \hat{\omega}_{n}\lambda(T_n)$
}
\KwResult{Final binary control $\omega = \sum_{n=1}^N
\chi_{T_n} \hat{\omega}_{n}$.}
\end{algorithm}

\begin{theorem}\label{thm:convergence_w_variant_of_sur}
Let $\alpha$ be a relaxed control. Let $\Delta > 0$. Let $\TT^0$ be a rounding grid
with $C > 0$ such that all grid cells $T_n^0$, $n \in [N^0]$, satisfy
\Crefpart{dfn:admissible_pair}{itm:integer_cell_multiples} and
\Crefpart{dfn:order_conserving_domain_dissection}{itm:regular_shrinkage}.
Let $\RA = \Cref{alg:sur}$. Then \Cref{alg:adaptive_grid_refinement} terminates
after finitely many iterations
such that $\alpha$, $\Delta$, and the rounding grid 
$\TT^k$ produced in the final iteration are
admissible in the sense of \Cref{dfn:admissible_pair}.
\end{theorem}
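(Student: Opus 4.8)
The plan is to separate the statement into two independent claims: that admissibility of $(\alpha,\TT^k,\Delta)$ is immediate as soon as the \texttt{while}-loop of \Cref{alg:adaptive_grid_refinement} stops, and that the loop cannot run forever. The first part is short. Suppose the loop stops at iteration $k$. Then \Cref{lem:rounding_grids_well_defined} already gives \Crefpart{dfn:admissible_pair}{itm:integer_cell_multiples} for $\TT^k$, and for \Crefpart{dfn:admissible_pair}{itm:feasible_control_for_scarp} I would simply exhibit $\omega \coloneqq \RA(\alpha,\TT^k)$: by construction of \Cref{alg:sur} this is a binary control that is constant on each cell of $\TT^k$, and the negated loop guard is exactly $d^{\TT^k}(\alpha,\omega)\le\Delta$. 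Hence $(\alpha,\TT^k,\Delta)$ is admissible.

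The substance is finite termination, which I would prove by contradiction: assume the loop never stops, so $d^{\TT^k}(\alpha,\omega^k)>\Delta$ for all $k$, with $\omega^k\coloneqq\RA(\alpha,\TT^k)$. Write $m_n^k \coloneqq \big\|\min\{\int_{T_n^k}\alpha,\int_{T_n^k}(\mathbbm{1}-\alpha)\}\big\|_\infty$ for the selection criterion of \Cref{ln:identify_most_fractional_cell} and call a cell \emph{active} if $m_n^k>0$. One checks that $m_n^k=0$ holds exactly when $\alpha$ equals a fixed basis vector $e_j$ a.e.\ on $T_n^k$, which is precisely the branch in which \Cref{alg:sur} sets $\omega^k=\alpha$ on the cell and leaves its running sum $\phi$ unchanged. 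The first technical ingredient is therefore a sum-up-rounding error estimate adapted to \Cref{alg:sur}: since $\phi$ is frozen across inactive cells, the standard estimate of \cite{manns2021approximation}, applied along the subsequence of active cells, should yield a discretization-independent constant $\bar C$ with $d^{\TT^k}(\alpha,\omega^k)\le \bar C\,v^k$, where $v^k\coloneqq\max\{\lambda(T_n^k)\mid T_n^k\text{ active}\}$. Establishing this refined bound, and verifying that the inactive cells genuinely contribute nothing, is one of the two cores.

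The second core is to prove $v^k\to0$, which I would route through $\bar m^k\coloneqq\max_n m_n^k$. Because $m_n$ is monotone under passing to subcells (each integral only decreases on a subset), splitting the arg-max cell of \Cref{ln:identify_most_fractional_cell} produces children with criterion value at most $\bar m^k$ while all other cells are untouched; hence $(\bar m^k)_k$ is non-increasing and converges to some $L\ge0$. To see $L=0$, observe that infinitely many splits build an infinite, finitely branching refinement forest over the finitely many cells of $\TT^0$, so by K\"onig's lemma it contains an infinite nested chain $C_0\supsetneq C_1\supsetneq\cdots$, and \Cref{ln:fractional_cell_split} forces $\lambda(C_{t+1})\le\tfrac12\lambda(C_t)$, whence $\lambda(C_t)\to0$. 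Each $C_t$ was the selected cell at its split, so $L\le \bar m^{k_t}=m(C_t)\le\tfrac12\lambda(C_t)\to0$, giving $L=0$, i.e.\ $\bar m^k\to0$.

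From $\bar m^k\to0$ I would finally deduce $v^k\to0$. Fix $v_0>0$. Since a cell at refinement depth $t$ has volume at most $2^{-t}\lambda(\Omega)$ and a cell of volume $\ge v_0$ has at most $\lambda(\Omega)/v_0$ children of volume $\ge v_0$, the cells of volume $\ge v_0$ form a finite set. Any active one among them has a fixed positive criterion value $m>0$; if it were never split it would persist in every later grid and force $\bar m^k\ge m$ for all large $k$, contradicting $\bar m^k\to0$. Thus every active cell of volume $\ge v_0$ is eventually split, and as there are finitely many of them, after finitely many iterations no active cell has volume $\ge v_0$; since $v_0$ was arbitrary, $v^k\to0$. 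Combining with the error bound yields $d^{\TT^k}(\alpha,\omega^k)\le\bar C\,v^k\to0$, so eventually $d^{\TT^k}(\alpha,\omega^k)\le\Delta$, contradicting the standing assumption and proving finite termination. The main obstacle I anticipate is exactly the interaction of the two cores: the algorithm refines according to $m_n$, which is only loosely coupled to cell volume, so the delicate point is the bridge from $\bar m^k\to0$ (a statement about the selection criterion) to $v^k\to0$ (the volume statement the error bound needs), which is why the persistence-and-finiteness argument above is required rather than a naive volume-halving estimate.
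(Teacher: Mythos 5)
Your proposal is correct and follows essentially the same route as the paper's proof: admissibility is immediate from \Cref{lem:rounding_grids_well_defined} plus the negated loop guard, and finite termination is shown by contradiction through the same chain — monotone decay of the selection criterion to zero via an infinite nested sequence of selected cells with geometrically halving volumes, then the persistence argument transferring this to the maximal volume of the ``active'' cells, and finally the sum-up rounding error bound on the reduced grid obtained by observing that cells where $\alpha$ equals a basis vector freeze $\phi$ and contribute nothing to $d^{\TT^k}$. Your treatment is somewhat more explicit than the paper's (K\"onig's lemma for the nested chain, the finiteness count for cells of volume at least $v_0$), but the decomposition and all key lemmas coincide.
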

\begin{proof}
\Cref{lem:rounding_grids_well_defined} and the termination criterion of
\Cref{alg:adaptive_grid_refinement} in \Cref{ln:condition} give that the triple
$(\alpha, \Delta, \TT^{k_f})$ for the final iteration $k_f \in \N$ is admissible if
\Cref{alg:adaptive_grid_refinement} terminates after finitely many iterations.
Thus it remains to show that the algorithm terminates after
finitely many iterations.

We note that if iteration $k$ is executed, there is some $n \in [N^k]$ such that
\[ \delta_n^k \coloneqq \left\|
  \min\left\{ \int_{T_n} \alpha(s)\dd s, \int_{T_n} \mathbbm{1} - \alpha(s)\dd s\right\}
  \right\|_\infty > 0.
\]
If this were not the case, then $\alpha$ would be a binary control and
\Cref{alg:sur} would have terminated with resulting binary control $\alpha$, implying
$d^{\TT^k}(\alpha, \alpha) = 0 < \Delta$, which in turn would have
caused termination of \Cref{alg:adaptive_grid_refinement}.

We prove the claim by assuming that \Cref{alg:adaptive_grid_refinement}
does not terminate after finitely many iterations and obtaining a contradiction.
If \Cref{alg:adaptive_grid_refinement} does not terminate after finitely many iterations,
infinitely many grid cells $T^k_{n^*}$ are split. Because $\Omega$ is bounded,
there is an infinite subsequence of the iterations, indexed by $\ell$, such that the
grid cells $((n^*)^{k_\ell})_\ell$ selected in
\Cref{ln:identify_most_fractional_cell} satisfy
$T^{k_\ell}_{{n^*}^{k_\ell}} \supset T^{k_{\ell + 1}}_{{n^*}^k_{\ell + 1}}$.
Moreover, we obtain
\[ \lambda(T^{k_{\ell + 1}}_{{n^*}^k_{\ell + 1}})
   \le \frac{1}{2} \lambda(T^{k_{\ell}}_{{n^*}^k_{\ell}})
   \le \ldots \le \frac{1}{2^\ell} \lambda(\Omega)
\]
because the volumes are only a fraction less than $0.5$ of the split
grid cell, see \Cref{alg:adaptive_grid_refinement} \Cref{ln:fractional_cell_split}.
Clearly, the definition of $\delta^k_n$ implies that $\delta^k_n \le \lambda(T^{k}_{n})$,
implying that $\delta^{k_\ell}_{{n^*}^{k_\ell}}$, the maximum value in
\Cref{ln:identify_most_fractional_cell} in iteration $k_\ell $ is bounded by
$\frac{1}{2^\ell} \lambda(\Omega) \to 0$ for $\ell \to \infty$. Because
$\max \{ \delta_n^k \,|\, n \in [N^k]\}$ is monotonously non-increasing from one iteration
of \Cref{alg:adaptive_grid_refinement} to the next, this implies that
$\max \{ \delta_n^k \,|\, n \in [N^k]\} \to 0$ for $k \to \infty$.

Next, we show that this implies that
\begin{gather}\label{eq:ivals_to_zero}
\Delta^k \coloneqq \max\left\{\lambda(T_n^k) \,\middle|\,
\begin{aligned}
\bigg\|\min\bigg\{\int_{T_n^k} \alpha(s)\dd s,
\int_{T_n^k} \mathbbm{1} - \alpha(s)\dd s\bigg\}
\bigg\|_\infty > 0\\    
 \text{ for } n \in [N^k]
\end{aligned}\right\} \to 0
\end{gather}
for $k \to \infty$. Assume this were not the case. Because grid cells
are either left unchanged or split into smaller cells whose volume
is only a fraction less than $0.5$ of the split grid cell, there exists a grid cell in
the sequence of rounding grids, indexed by $(n^k)_k$, which is not refined after finitely
many iterations and which satisfies $\delta^{k}_{n^k} > 0$,
implying that $\delta^{k_0}_{n^k_0} = \delta^k_{n^k}$
for all $k \ge k_0$ and some $k_0$. This can only
happen if $\delta^{k_0}_{n^k_0} = 0$ because
$\max \{ \delta_n^k \,|\, n \in [N^k]\} \to 0$,
which gives a contradiction and thus 
\eqref{eq:ivals_to_zero} holds true.

Let $k \in \N$, $n \in\N$. We observe that if $\alpha$ is binary-valued on a grid
cell $T_n^k$, then two properties hold. First, we observe that this is equivalent
to
\begin{gather*}
    \min \bigg\{
    \int_{T_n^k} \alpha(s)\dd s,  
    \int_{T_n^k} \mathbbm{1} - \alpha(s)\dd s \bigg\} = 0,
\end{gather*} 
implying that such grid cells are exactly those that are excluded from the
maximization in \eqref{eq:ivals_to_zero}. Second, the choice that $\omega$,
computed by \Cref{alg:sur}, on this grid is equal to $\alpha$ on $T_n^k$
implies $\phi_{n} = \phi_{n-1}$. Thus \Cref{alg:sur} behaves exactly as
if it were executed on a rounding grid where the grids $T_n^k$ are dropped.
Moreover, these grid cells do not alter to the quantity
$d^{\TT^k}(\alpha,\omega)$ because they satisfy the identity
$\int_{T^k_n} (\alpha - \omega) = 0$.

Assume that $\omega^k$ is computed by $\RA = \Cref{alg:sur}$
on rounding grid $\TT^k$. 
Because \Cref{alg:sur} behaves as if it were executed on a
grid without the intervals, on which $\alpha$ is binary-valued,
and the value of  $d^{\TT^k}(\alpha,\omega)$ is not altered on
these intervals do, we can apply the theory of the 
unmodified variant of sum-up rounding to the rounding
grid where these intervals are dropped.

This yields \cite{kirches2020approximation,sager2012integer,manns2020multidimensional}
that there exists some $\kappa > 0$, independent of $\TT^k$
and rounding grid $\alpha$, such that
\begin{gather}\label{eq:constant_c_sur}
d^{\TT^k}(\alpha,\omega^k) \le \kappa \Delta^k.
\end{gather}
Combining \eqref{eq:constant_c_sur} with \eqref{eq:ivals_to_zero}
implies $d^{\TT^k}(\alpha, \omega^k) \to 0$ 
and in turn that the termination criterion of 
\Cref{alg:adaptive_grid_refinement} is satisfied after finitely
many iterations.
\end{proof}

\section{Switching Cost Aware Rounding on Non-uniform Discretizations}
\label{sec:scarp_non-uniform}
We briefly introduce the switching cost aware rounding problem in its integer programming
formulation and describe its structure and known features. We point out why the problem
is considerably more intricate for multi-dimensional grids.
Afterwards, we provide a set
of linear inequalities that are valid cuts for the integer programming formulation.
Moreover, we provide a primal heuristic that is based on the algorithm that is used
for the one-dimensional case in \cite{bestehorn2021mixed}.

\subsection{Problem Formulation}

Based on a rounding grid $\TT = (T_1,\ldots,T_N)$, switching cost aware rounding
is a rounding algorithm by solving a mixed-integer program to determine a binary
control $\omega$ feasible for~\eqref{eq:p} while satisfying $d^{\TT}(\alpha,\omega) \le \Delta$.
To this end, we introduce variables $\omega_{n, m} \in \{0, 1\}$ for all $n \in [N]$ and $m \in [M]$,
yielding the control
\begin{equation*}
    \omega(s) \coloneqq \sum_{n=1}^N \left( \chi_{T_n}(s) \sum_{m=1}^M \omega_{n, m} e_m \right).
\end{equation*}

\paragraph{Constraint set}
To ensure feasibility, we impose the constraint $\sum_{m=1}^{M} \omega_{n, m} = 1$
for all $n \in [N]$ in order to model the one-hot encoding. To satisfy the distance requirement, we let 
\begin{equation*}
    \alpha_{n, m} \coloneqq \frac{1}{\lambda(T_n)} \int_{T_n} \alpha_m(s) \dd s
\end{equation*}
be the average value of $\alpha_m$ over $T_n$ and require that
\begin{gather}
    \label{eq:disc_distance}
    -\Delta \leq \sum_{i=1}^{n} \lambda(T_i) \left[ \omega_{i, m} - \alpha_{i, m} \right] \leq \Delta
\end{gather}
for all $n \in [N]$ and $m \in [M]$. Each one of these constraints
corresponds to a knapsack constraint
(see~\cite{pisinger1998knapsack} for a survey on knapsack problems)
either in the  variables $\omega$ or in their negations. Thus,
optimizing a linear objective in $\omega$ subjecting to just a single
constraint in \eqref{eq:disc_distance} is NP-hard in general.

In the following we include additional structure to the problem by making
another assumption regarding the rounding grid $\TT$.
Recall that the grid is obtained by a series of refinement steps
conducted by \Cref{alg:adaptive_grid_refinement} based on an initial grid $\TT^{0}$.
We assume that $\TT^{0}$ is composed of a single cell $T^{0}$ and that in each iteration
the disjoint partition $\hat{T}_1^{k} \cup \cdots \cup \hat{T}^k_{\ell^{k}} = T^k_{n^*}$ is such
that $\ell^{k}$ is a power of two and the sets $\hat{T}_j^{k}$
have the same volume of $1/{\ell^{k}} \lambda(T^k_{n^*})$ for 
all $j \in [\ell^{k}]$. In practice, such a partition can be achieved 
based on bisections along all $d$ axes. Consequently the volumes $\lambda(T_i)$
have values of $2^{-j_i} \lambda(T^{0})$,
where the values $j_i \in \N$ are determined by how many times $T^{0}$ is refined
by \Cref{alg:adaptive_grid_refinement}. We can therefore
scale \eqref{eq:disc_distance}
to obtain linear constraints of the form
\begin{equation}
    \label{eq:knapsack_inequalities}
    l_{m, n} \leq \sum_{i=1}^{n} 2^{k_i} \omega_{i, m} \leq u_{m, n},
\end{equation}
where $k_i \coloneqq j_{\max} - j_i$ for $j_{\max} \coloneqq \max_{n=1}^{N} j_n$,
\begin{equation*}
    \begin{aligned}
        l_{m,n} &\coloneqq
        \left\lceil
        2^{j_{\max}}
        \left( 
        - \frac{\Delta}{\lambda(T^{0})} + \sum_{i=1}^{n} 2^{-j_{i}} \alpha_{i,m}
        \right)
        \right\rceil
        \text{, and } \\
        u_{m,n} &\coloneqq 
        \left\lfloor 2^{j_{\max}}
        \left( 
        \frac{\Delta}{\lambda(T^{0})} + \sum_{i=1}^{n} 2^{-j_{i}} \alpha_{i,m}
        \right)
        \right\rfloor    
    \end{aligned}
\end{equation*}
for all $n \in [N]$ and $m \in [M]$. Note that the values $k_i$ are non-negative
with at least one being zero. The final set of constraints includes 
the distance requirements as well as SOS1 constraints
modeling the one-hot encoding of $\omega$:
\begin{equation}
    \label{eq:ip_cons}
    \begin{aligned}
        l_{m, n} \leq \sum_{i=1}^{n} 2^{k_i} \omega_{i, m} \leq u_{m, n} \quad & \forall m \in [M], n \in [N]\enskip\text{and} \\
        \sum_{j = 1}^{N} \omega_{i, j} = 1  \quad & \forall i \in [M]
    \end{aligned}
\end{equation}

Regarding the constraints~\eqref{eq:knapsack_inequalities}, note that
since $2^{k_i}$ divides $2^{k_j}$ for $k_i \leq k_j$,
we can order the variables ascending by $k_i$, thereby
ensuring that the weight of a given item divides the weights of all succeeding
items. A knapsack constraint with this property is called 
\emph{sequential}~\cite{hojny2020knapsack}. Sequential knapsack problems
can in fact be solved polynomial time~\cite{hartmann1993solving}.
What is more, a complete description of the sequential knapsack polytope 
is known~\cite{pochet1998sequential}. Unfortunately however,
the description is exponential in size and 
it is (to the best of our knowledge) not known how
to separate over it in polynomial time.

\paragraph{Objective} For every grid cell indexed by $n \in [N]$, there is a set $\mathcal{N}(n)$
of indices of adjacent (or neighboring) grid cells. The objective of switching cost aware rounding
can then be stated as
\begin{gather}\label{eq:scarp_objective}
\frac{1}{2} \sum_{n=1}^N \sum_{k \in \mathcal{N}(T_n)} 
  \sum_{i=1}^M \sum_{j=1}^M |\omega_{n,i} - \omega_{k,j}| 
  \Ha^{d-1}(\overline{T_n} \cap \overline{T_k})
  c_{ij} 
\end{gather}
for a symmetric cost coefficient matrix $(c_{ij})_{i,j \in [M]} \in \R^{M \times M}_{> 0}$,
where $c_{ij}$ models the cost of switching from control realization $i$ to $j$ (or vice versa)
from one grid cell to an adjacent one and
$\Ha^{d-1}(\overline{T_n} \cap \overline{T_k})$ is the $(d - 1)$-dimensional Hausdorff measure
of the $(d - 1)$-dimensional interface area that separates the grid cells $T_n$ and $T_k$
from each other.

\subsection{Valid Inequalities}
\label{sec:valid_ineqaulities}

We proceed to derive several classes of valid inequalities based on 
the constraints~\eqref{eq:knapsack_inequalities}. Since individual knapsack inequalities 
are well understood and separation algorithms are part of many state-of-the-art integer
programming solvers, we focus on combinations of several inequalities, exploiting the 
structure of the weights.

\paragraph{Lattice-based inequalities}

Consider fixed indices $n \in [N]$, $m \in [M]$ and let $k_{\max} \coloneqq \max_{i=1}^{n} k_i$. 
We split the set of variables, indexed by $I \coloneqq [n]$ into 
$I_{\max} \coloneqq \{i \in I \mid k_i = k_{\max}\}$ and $I_{C} \coloneqq I \setminus I_{\max}$.
The variables in $I_{\max}$ produce points on the lattice $\{j 2^{k_{\max}} \mid j \in \mathbb{Z} \}$ in
inequality~\eqref{eq:knapsack_inequalities}. The given lattice point is then shifted
depending on the variables in $I_{C}$. If we fix these variables to some 0/1
values $\overline{\omega}_{I_{C}, m}$, we obtain the following:
\begin{equation*}
    l(\overline{\omega}) \coloneqq l_{m, n} - \sum_{i \in I_{C}} 2^{k_i} \overline{\omega}_{i, m}
    \leq \sum_{i \in I_{\max}} 2^{k} \omega_{i, m}
    \leq u_{m, n} - \sum_{i \in I_{C}} 2^{k_i} \overline{\omega}_{i, m} \eqqcolon u(\overline{\omega}).
\end{equation*}
Clearly, if $[l(\overline{\omega}), u(\overline{\omega})]$ does not contain any lattice points,
the fixing $\overline{\omega}_{I_{C}, m}$ is infeasible and the inequality
\begin{equation*}
    \|{\omega}_{I_{C}, m} - \overline{\omega}_{I_{C}, m} \|_1 \geq 1
\end{equation*}
is valid for~\eqref{eq:knapsack_inequalities}. In order for the interval to
not contain any lattice points, it must hold that
\begin{equation*}
    u_{m, n} - 2^{k_{\max}} + 1 
    \leq \sum_{i \in I_{C}} 2^{k_i} \overline{\omega}_{i, m} + 2^{k_{\max}} y
    \leq l_{m, n} - 1,
\end{equation*}
where $y \in \mathbb{Z}$. We can therefore separate a fractional solution $\omega^{*}$ by
optimizing $\|{\omega}^{*}_{I_{C}, m} - \overline{\omega}_{I_{C}, m} \|_1$ over this set,
solving an integer program with $|I_{C}|$ 0/1 variables, the general integer variable $y$ and 
one linear constraint.

\paragraph{Parity inequalities}

Consider a fixed $m \in [M]$ and an interval $I \coloneqq \{r, \ldots, s\}$ with
$1 \leq r < s \leq n$. By combining inequalities~\eqref{eq:knapsack_inequalities}, we can 
derive that
\begin{equation*}
    l_{s, n} - u_{r, n}
    \leq \sum_{i = r + 1}^{s} 2^{k_i} \omega_{i, n}
    \leq u_{s, n} - l_{r, n}.
\end{equation*}
Based on the interval $I$, we let $k_{\min} \coloneqq \min_{i=r+1}^{s} k_i$ and
$k_{\max} \coloneqq \max_{i=r+1}^{s} k_i$. For any $k \in \{k_{\min}, \ldots, k_{\max}\}$ we
can divide these inequalities by $2^{k}$ and round them to obtain the
valid inequalities
\begin{equation*}
\begin{aligned}
    \sum_{i \in I : k_i \geq k}^{n} 2^{k_i - k} \omega_{i, n} 
    &\leq \lfloor 2^{-k} (u_{s, n} - l_{r, n}) \rfloor \qquad \text{and} \\
    \lceil 2^{-k} (l_{s, n} - u_{r, n}) \rceil
    &\leq \sum_{{i \in I : k_i < k}} \omega_{i, n}
    + \sum_{{i \in I : k_i \geq k}} 2^{k_i - k}\omega_{i, n},
\end{aligned}
\end{equation*}
which can be separated simply by enumerating all intervals in quadratic time.

\subsection{Primal Heuristic}
\label{sec:primal_heuristic}

If $\Omega \subset \R$, switching cost aware rounding can be treated as a shortest path search
on a directed acyclic graph, see \cite{bestehorn2021mixed}. This is because the cost function
of the combinatorial optimization problem is \emph{sequence-dependent}
in the sense of
\cite[Definition 2.47]{bestehorn2021combinatorial}. Specifically, the optimal decision for the
variables corresponding to one grid cell only depend on the optimal choices of previously
decided variables when the grid cells are ordered along the axis $\R$. 
This property does not carry over to the case of rounding grids that decompose multi-dimensional
domains because when ordering the grid cells in a certain way, there will always be parts of the
cost function that depend on the values of the optimization variables that are not close in terms
of the ordering of the grid cells but adjacent in the domain as a subset of $\R^d$, $d \ge 2$.
While the analysis is carried out for uniform rounding grids in \cite{bestehorn2021combinatorial},
the algorithmic approach is still valid for grids, where the volumes of the grid cells are integer
multiples of a positive real scalar, see \cite[Sections 4.4.1 and 6.4.4]{bestehorn2021combinatorial}.

However, one can fix an ordering of the grid cells and only take the parts of the objective
that depend on previously decided variables into account into a decision. In this way,
one obtains a relaxation that allows to compute suboptimal points using the algorithm from
the one-dimensional case for the two-dimensional problem. This has been proposed in 
\cite{bestehorn2021switching} and constitutes a primal heuristic for the switching cost aware
rounding problem.

Moreover, one can improve over this heuristic by basing the decision on all past decisions and the
potential decisions of the next $k$ grid cells along the ordering of the grid cells, thereby neglecting
fewer terms of the objective. In this way one can balance the runtime for the primal heuristic
with the quality of its solution. This decision window of length $k$ is called prefix in 
\cite{bestehorn2021switching}. The heuristic can be improved further: grid cells are removed from
the prefix if all adjacent grid cells were visited, thereby reducing the computational effort.

Because of the curse of dimensionality, the positive effect of the heuristic decreases
with increasing dimension of $\Omega$.

\section{Computational Experiments}\label{sec:computational_experiments}

We consider the topology optimization problem of designing cloaks for 
wave functions governed by the Helmholtz equation in a 2D scenario similar to 
\cite{haslinger2015topology,leyffer2021convergence}.

\subsection{Problem Instances}

For an incident
wave $y_0$ and a design area $D_s \subset \Omega \subset \mathbb{R}^2$, 
we seek a function $v : D_s \to \{\nu_1,\nu_2,\nu_3\}$, where
$\nu_1 = 0$, $\nu_2 = 0.5$, and $\nu_3 = 1$ are possible material
constants with $v(x) = \nu_1$ indicating that no material is placed at $x$.
The goal is to protect an object in another region
$D_o \subset \Omega \subset \mathbb{R}^2$
from the incident wave. The domain together with the areas $D_s$ and $D_o$ 
are depicted in \Cref{fig:scenario}.
Using partial outer convexification, we obtain
the reformulation $v(x) = \sum_{i=1}^M w_i(x) \nu_i$ for a.a.\
$x \in \Omega$, and the considered optimization problem is
\begin{equation}
\label{eq:p_Helmholtz}
\tag{P$_H$}
\begin{aligned}
	\inf_{u,w} \quad  & \frac{1}{2}\|y + y_0\|^2_{L^2(D_o)} \\
	\text{ s.t.\ } \quad & -\Delta y - k_0^2 y = \left(k_0^2 y q + k_0^2 q y_0\right) \sum_{i=1}^M v_i w_i \quad\text{ in } \Omega,
	               \nonumber\\
                 & (\partial y/\partial n) - i k_0 y = 0
                   \quad\text{ on } \partial \Omega, \nonumber\\
	             & w(x) \in \{0,1\}^3 \text{ and } \sum_{i=1}^3w_i(x) = 1
	                \text{ for a.a.\ } x \in D_s
	               \nonumber, \\
	             & w(x) = 0 \text{ for a.a.\ } x \in D \backslash D_s, \nonumber    
\end{aligned}
\end{equation}
where $k_0$ is the wave number ($k_0 = 6\pi$ in our experiments) and
the incident wave is given by $y_0 = \exp(i k_0 d^Tx)$
for $x \in \Omega$ for some direction $d$ (see below) on the unit sphere with $\|d\| = 1$.
We solve the problem based on different choices of the incident wave $y_0$, specifically
for directions $d = (\sin(\delta), \cos(\delta))^T$
for $\delta \in \{0, 5, 10, 15\}$ degrees.
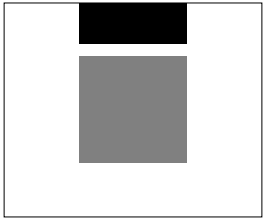
\begin{figure}[hb]
    \begin{center}
    	\begin{tikzpicture}[scale=0.5]
    	\begin{axis}[xtick=\empty,ytick=\empty,xmin=0.0,xmax=1.0,ymin=0.0,ymax=1.0,
    	             axis equal,axis background/.style={fill=white}]]
    		\draw[color=white!50!black,fill=white!50!black,line width=0pt,draw=none] (axis cs:.25,.25) rectangle (axis cs:.75, .75);
    		\draw[color=black,fill=black,line width=0pt,draw=none]
    			(axis cs:.25,.8125) rectangle (axis cs:.75,1.);
    		\end{axis}		
    	\end{tikzpicture}
    	\caption{Domain $\Omega = (0,2)^2$ with the scatterer design area
    	$D_s = (.5,1.5)^2$ in light gray and the protected area
    	$D_o = (0.5,1.5)\times (1.625,2.)$ in black.}
    \label{fig:scenario}
    \end{center}
\end{figure}

\subsection{Discretization and Solution of the Relaxed Instances}

We follow the setup of our example from \cite{bestehorn2021switching}.
We choose a uniform discretization of $\Omega$ a grid of $2^k\times 2^k$ squares.
Each of the squares is split into $4$ triangles. Starting from this grid,
we solve the discretized Helmholtz equation with the open-source library \textsc{Firedrake}
\cite{Rathgeber2016}, where we use \textsc{PETSc} numerical linear
algebra backend \cite{petsc-user-ref}. The adjoint equation for the
discretized reduced objective is computed using \textsc{dolfin-adjoint} 
\cite{farrell2013automated}.

Due to the choice of design area, a fourth of the rectangles in
the discretization, i.e., $N(k) = 2^{2k - 2}$ in total, are available
to place material. We solve the problem based on the values $k = 6, 7$,
resulting in $N(k) = 1024, 4096$ cells. Together with the choices of $\delta$,
this yields a total of eight
computational instances, which are then solved to local optimality.

\subsection{Computing Binary from Relaxed Controls}
After solving the continuous relaxations instances to local optimality,
we compute non-uniform grids by means of~\Cref{alg:adaptive_grid_refinement}
with $\RA = \Cref{alg:sur} $.
Then we solve the corresponding switching cost aware rounding problems
on the uniform and the non-uniform grids. To this end, we model the objective
\eqref{eq:scarp_objective} by means of additional variables and linear inequalities.
Specifically, we introduce variables $\lambda_{i,j}^{m} \geq 0$ 
for all $1 \leq i < j < N$ and $m \in [M]$ where
$\Ha^{d-1}(\overline{T_i} \cap \overline{T_j}) \neq 0$
and require that
\begin{equation}
    \label{eq:ip_cost_cons}
    \lambda_{i,j}^{m} \geq \omega_{i, m} - \omega_{j, m} 
    \enskip\text{and}\enskip
    \lambda_{i,j}^{m} \geq \omega_{j, m} - \omega_{i, m}.
\end{equation}
For our experiments, we set the cost coefficients to $c \equiv 1$.
Together with the constraints above, this yields a 
mixed-integer program with $NM$ binary control variables $\omega$ constrained
by~\eqref{eq:ip_cons} as well as a number of cost variables $\lambda$ coupled
to the control variables based on constraints~\eqref{eq:ip_cost_cons}. 

We solve these instances 
using \textsc{SCIP}~8.0.3~\cite{SCIP} and \textsc{Gurobi}~9.1~\cite{gurobi} as underlying LP solver setting a time limit of \SI{3600}{\second}
for each instance. As a baseline, we used as a rounding grid the completely refined grid consisting of $N(k)$ cells,
yielding the results in~\Cref{table:baseline_uniform}. We see that most instances
are not solved to optimality within the given time limit. The gaps between
the best primal solution objective $p$ and the best dual objective $d$,
defined as $(p - d) / d$ are well above \SI{100}{\percent} for the \emph{small}
instances with $k = 6$ and no lower bounds were computed within the prescribed
time limit for the \emph{large} ones with $k = 7$.
This is likely due to the problems size, which, in terms of the number of problem variables,
is $\approx\num{20000}$ for the large instances.
\begin{table}[h]
    \centering
    \begin{tabular}{rrS[table-format=2.4,round-mode=places,round-precision=4]S[table-format=2.4,round-mode=places,round-precision=4]S[table-format=2.4,round-mode=places,round-precision=2]}
    \toprule
        \multicolumn{2}{c}{{\textbf{Instance}}} & {\textbf{Primal bound}} & {\textbf{Dual bound}} & {\textbf{Gap}}  \\
        k & $\delta$ \\
    \midrule
    \multirow{4}{*}{6}
        &  0 & 21.1875 & 9.16863 & 1.31087 \\
        &  5 & 29.1875 & 10.7325 & 1.71955 \\
        & 10 & 28.75 & 10.6772 & 1.69266 \\
        & 15 & 28.5 & 9.84206 & 1.89573 \\
    \midrule
    \multirow{4}{*}{7}
        &  0 & 37.1875 & {--} & {--} \\
        &  5 & 45.75 & {--} & {--} \\
        & 10 & 52.4062 & {--} & {--} \\
        & 15 & 49.7188 & {--} & {--} \\
    \bottomrule
    \end{tabular}
    \caption{Baseline solutions on uniform grids.}
    \label{table:baseline_uniform}
\end{table}
The results for the non-uniform grids computed by~\Cref{alg:adaptive_grid_refinement}
are shown in~\Cref{table:baseline}. The number of cells in the final refinement is about half
compared to the original number $N(k)$, yielding a similar reduction in the number of variables in
the corresponding program. As a result, the
problems exhibit generally lower runtimes, leading to nontrivial
dual bounds even for the large instances. On the other hand, the less structured programs appear to 
impede the built-in primal heuristics, with no primal bounds being found for the large
instances. This situation is easily remedied by using the solution provided by the
initial heuristic described in \Cref{sec:primal_heuristic}, leading to the results
in \Cref{table:primal}, with both primal and dual bounds being found for each
instance within the prescribed time limit. Unfortunately, however, the resulting gaps remain
substantial, ranging from  \SI{38}{\percent} for a \emph{small} 
instance to about \SI{162}{\percent} for a \emph{large} one. The inclusion of the valid
inequalities introduced in \Cref{sec:valid_ineqaulities} yields a significant improvement
in this respect (see \Cref{table:full}).
\begin{table}[h]
    \centering
    \begin{tabular}{rrS[table-format=4]S[table-format=2.4]S[table-format=2.4]S[table-format=2.4,round-mode=places,round-precision=2]}
    \toprule
        \multicolumn{2}{c}{{\textbf{Instance}}} & {\textbf{N}} & {\textbf{Primal bound}} & {\textbf{Dual bound}} & {\textbf{Gap}}  \\
        k & $\delta$ \\        
    \midrule
    \multirow{4}{*}{6}
        &  0 & 652 & 21.5625 &12.447 & 0.732348 \\
        &  5 & 607 & 18.5625& 15.1035 & 0.229018 \\
        & 10 & 640 & 18.25 & 13.7847 & 0.32393 \\
        & 15 & 676 & 22.625 & 11.4893 & 0.969228 \\
    \midrule
    \multirow{4}{*}{7}
        &  0 & 1666 & {--} & 16.321 & {--} \\
        &  5 & 1825 & {--} & 16.65 & {--} \\
        & 10 & 1936 & {--} & 15.0073 & {--} \\
        & 15 & 1831 & {--} & 15.5981 & {--} \\
    \bottomrule
    \end{tabular}
    \caption{Baseline solutions on refined grids.}
    \label{table:baseline}
\end{table}
\begin{table}[h]
    \centering
    \begin{tabular}{rrS[table-format=4]S[table-format=2.4]S[table-format=2.4]S[table-format=2.4,round-mode=places,round-precision=2]}
    \toprule
        \multicolumn{2}{c}{{\textbf{Instance}}} & {\textbf{N}} & {\textbf{Primal bound}} & {\textbf{Dual bound}} & {\textbf{Gap}}  \\
        k & $\delta$ \\        
    \midrule
    \multirow{4}{*}{6}
        &  0 & 652 & 19.3125 & 12.4472 & 0.551555 \\
        &  5 & 607 & 23.125 & 13.3345 & 0.734224 \\
        & 10 & 640 & 18.5625 & 13.4894 & 0.376085 \\
        & 15 & 676 & 20.125 & 11.8544 & 0.697675 \\
    \midrule
    \multirow{4}{*}{7}
        &  0 & 1666 & 30.1562 & 16.458 & 0.83232 \\
        &  5 & 1825 & 35.3438 & 16.5781 & 1.13196 \\
        & 10 & 1936 & 39.5 & 15.0614 & 1.6226 \\
        & 15 & 1831 & 37.6875 & 15.6568 & 1.4071 \\
    \bottomrule
    \end{tabular}
    \caption{Solutions with primal heuristic on refined grids.}
    \label{table:primal}
\end{table}
\begin{table}[h]
    \centering
    \begin{tabular}{rrS[table-format=4]S[table-format=2.4]S[table-format=2.4]S[table-format=2.4,round-mode=places,round-precision=2]}
    \toprule
        \multicolumn{2}{c}{{\textbf{Instance}}} & {\textbf{N}} & {\textbf{Primal bound}} & {\textbf{Dual bound}} & {\textbf{Gap}}  \\
        k & $\delta$ \\        
    \midrule
    \multirow{4}{*}{6}
        &  0 & 652 & 18.6875 & 14.8329 & 0.259869 \\
        &  5 & 607 & 18.375 & 15.7063 & 0.16991 \\
        & 10 & 640 & 17.75 & 14.594 & 0.216255 \\
        & 15 & 676 & 19.0625 & 13.8066 & 0.380677 \\
    \midrule
    \multirow{4}{*}{7}
        &  0 & 1666 & 30.1562 & 18.5489 & 0.625771 \\
        &  5 & 1825 & 35.3438 & 18.4887 & 0.91164 \\
        & 10 & 1936 & 39.5 & 17.0939 & 1.31076 \\
        & 15 & 1831 & 37.6875 & 17.6582 & 1.13428 \\
    \bottomrule
    \end{tabular}
    \caption{Solutions with primal heuristic and separators on refined grids.}
    \label{table:full}
\end{table}

In line these numerical improvements, there are non-negligible visible differences computed controls.
In order to provide an impression, we plot the computed controls
and the absolute values of the real part of the corresponding PDE solutions for the case
$k = 7$ and $\delta = 0$ in \Cref{fig:controls_and_states}.
\begin{figure}[h]
    \centering
    \begin{subfigure}{.32\linewidth}
        \centering
        \includegraphics[width=\textwidth]{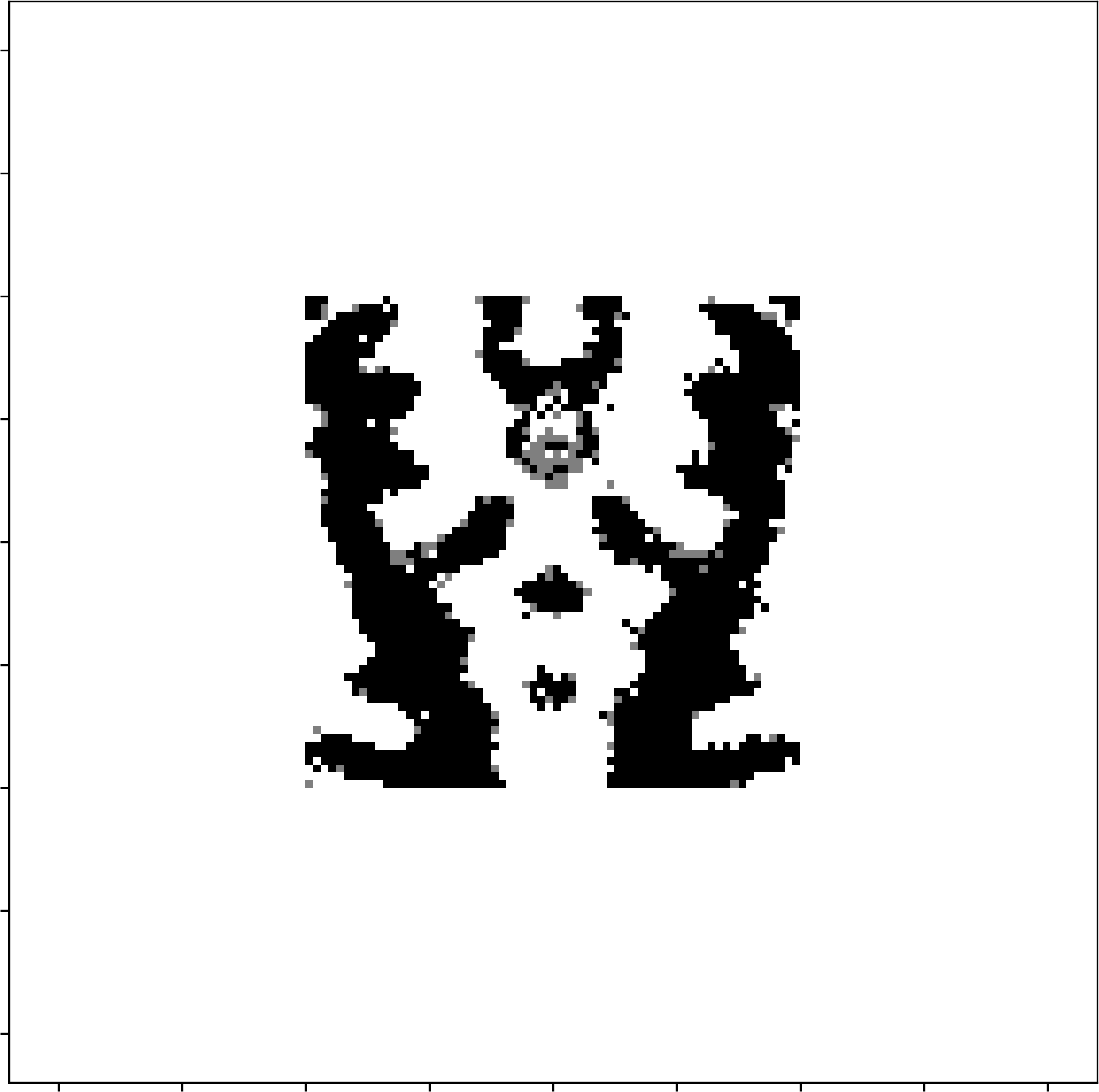}
    \end{subfigure} %
    \hfill
    \begin{subfigure}{.32\linewidth}
        \centering
        \includegraphics[width=\textwidth]{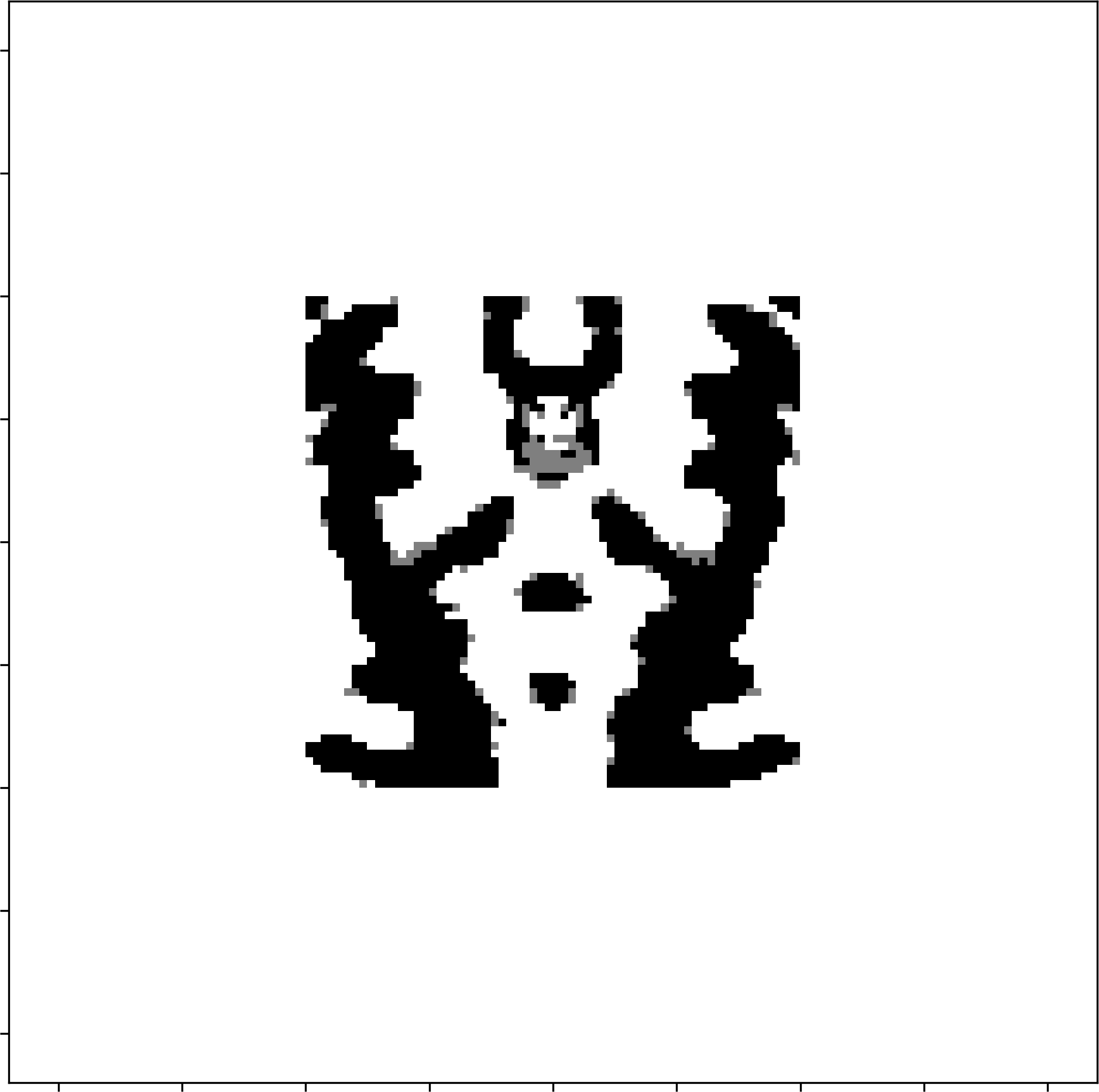}
    \end{subfigure}
    \hfill
    \begin{subfigure}{.32\linewidth}
        \centering
        \includegraphics[width=\textwidth]{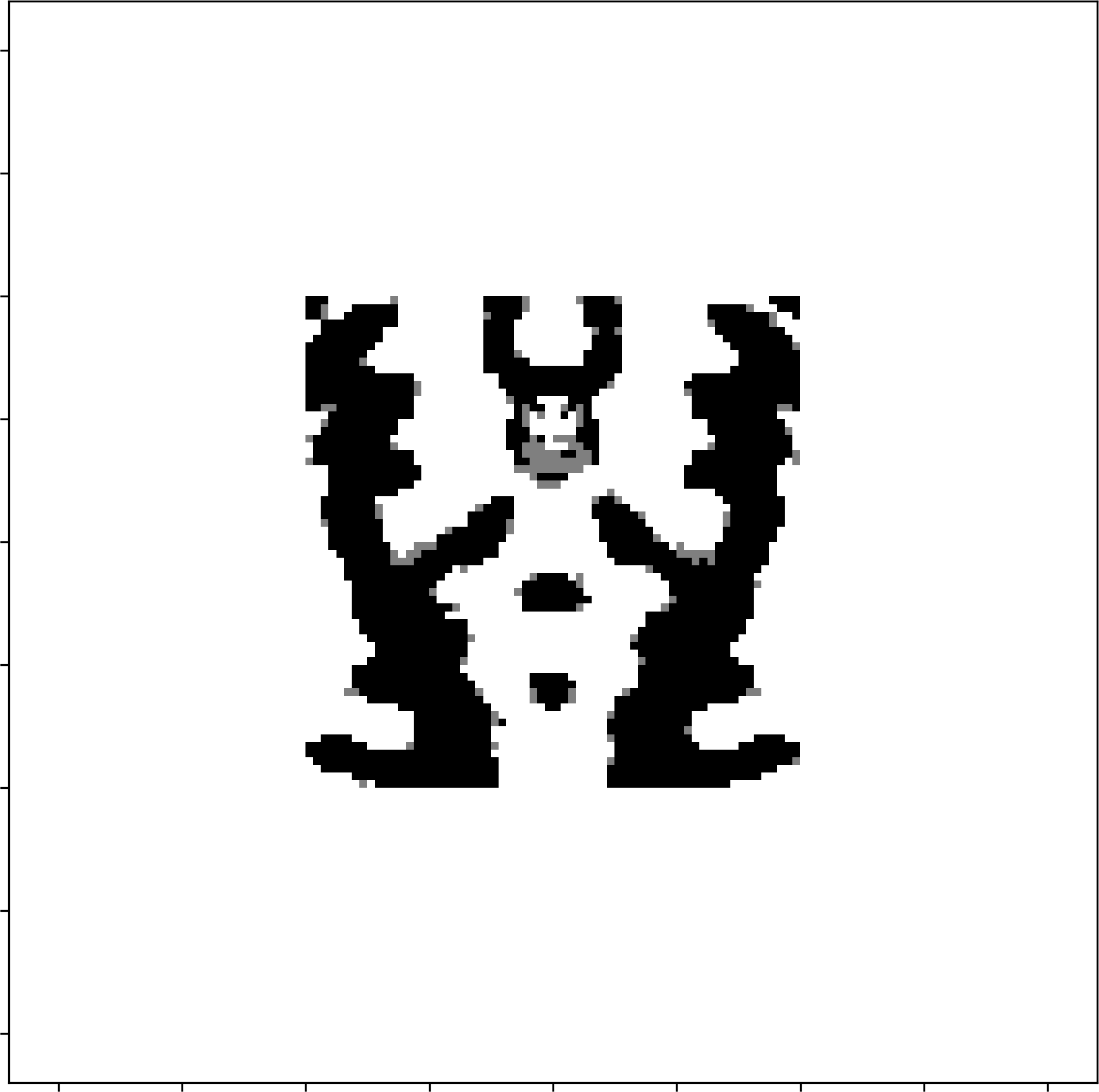}
    \end{subfigure}
    
    \begin{subfigure}{.32\linewidth}
        \centering
        \includegraphics[width=\textwidth]{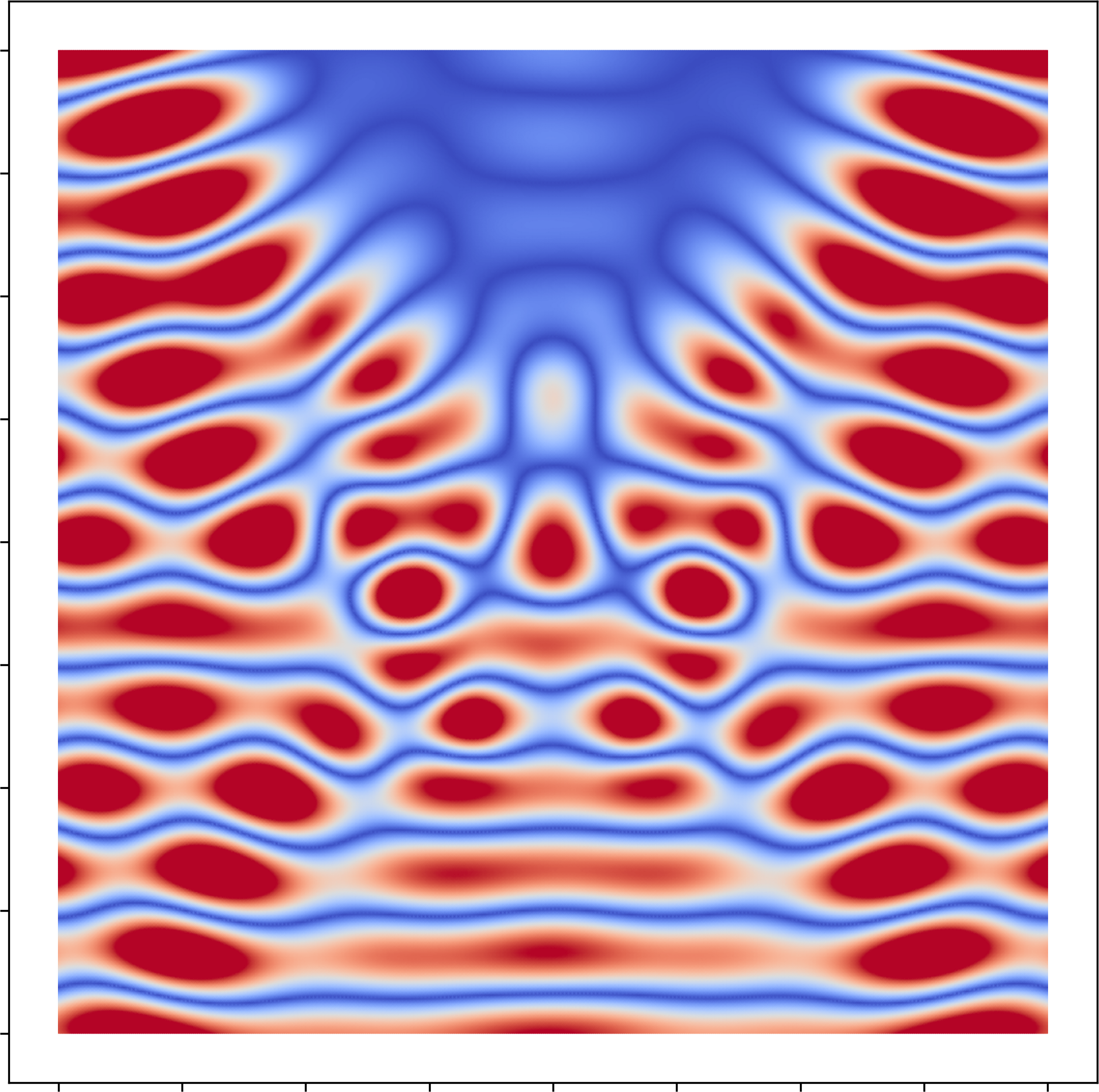}
        \caption{}\label{fig:img_baseline_uniform}
    \end{subfigure} %
    \hfill
    \begin{subfigure}{.32\linewidth}
        \centering
        \includegraphics[width=\textwidth]{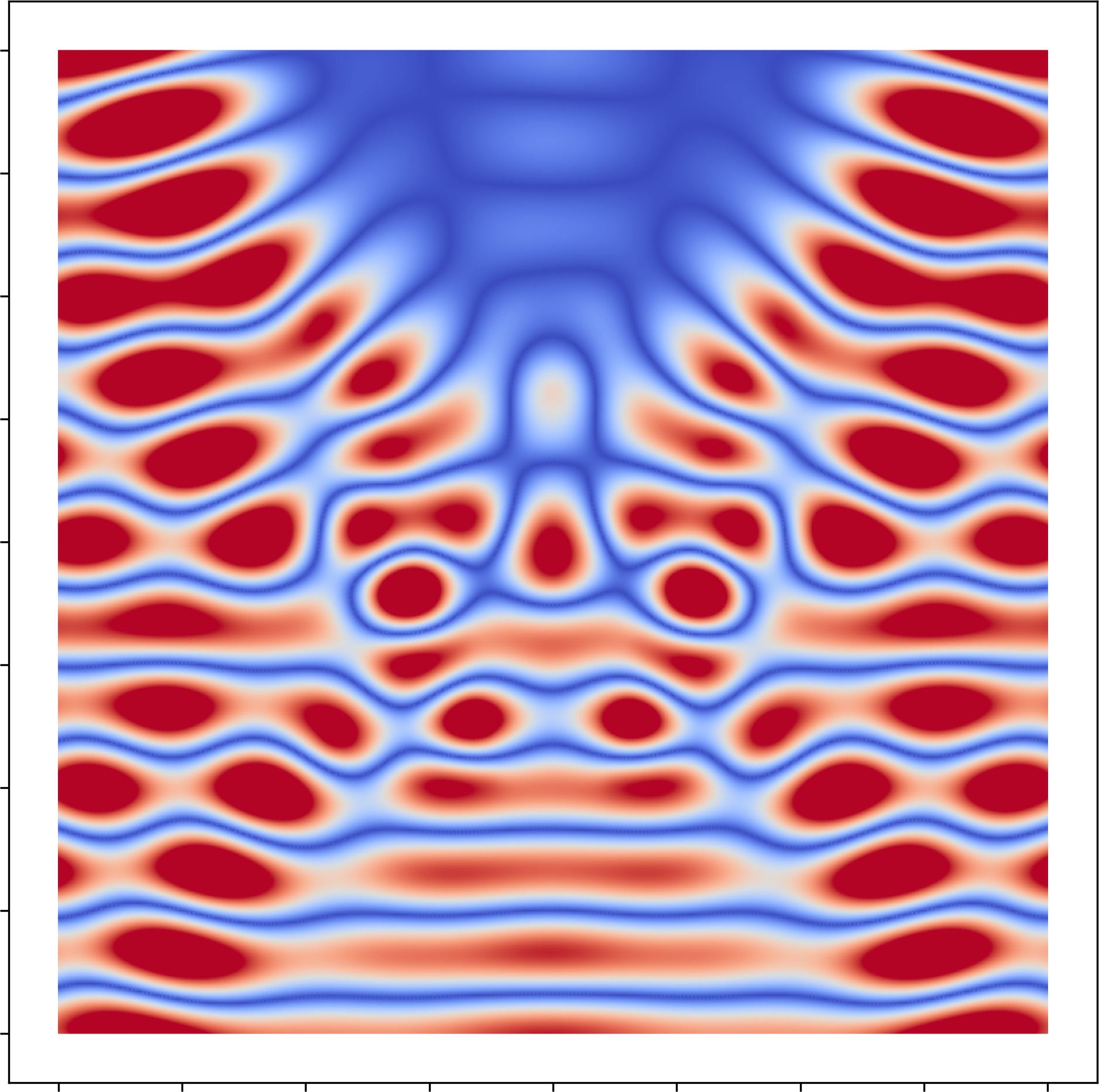}
        \caption{}\label{fig:primal_heuristic}
    \end{subfigure}
    \hfill
    \begin{subfigure}{.32\linewidth}
        \centering
        \includegraphics[width=\textwidth]{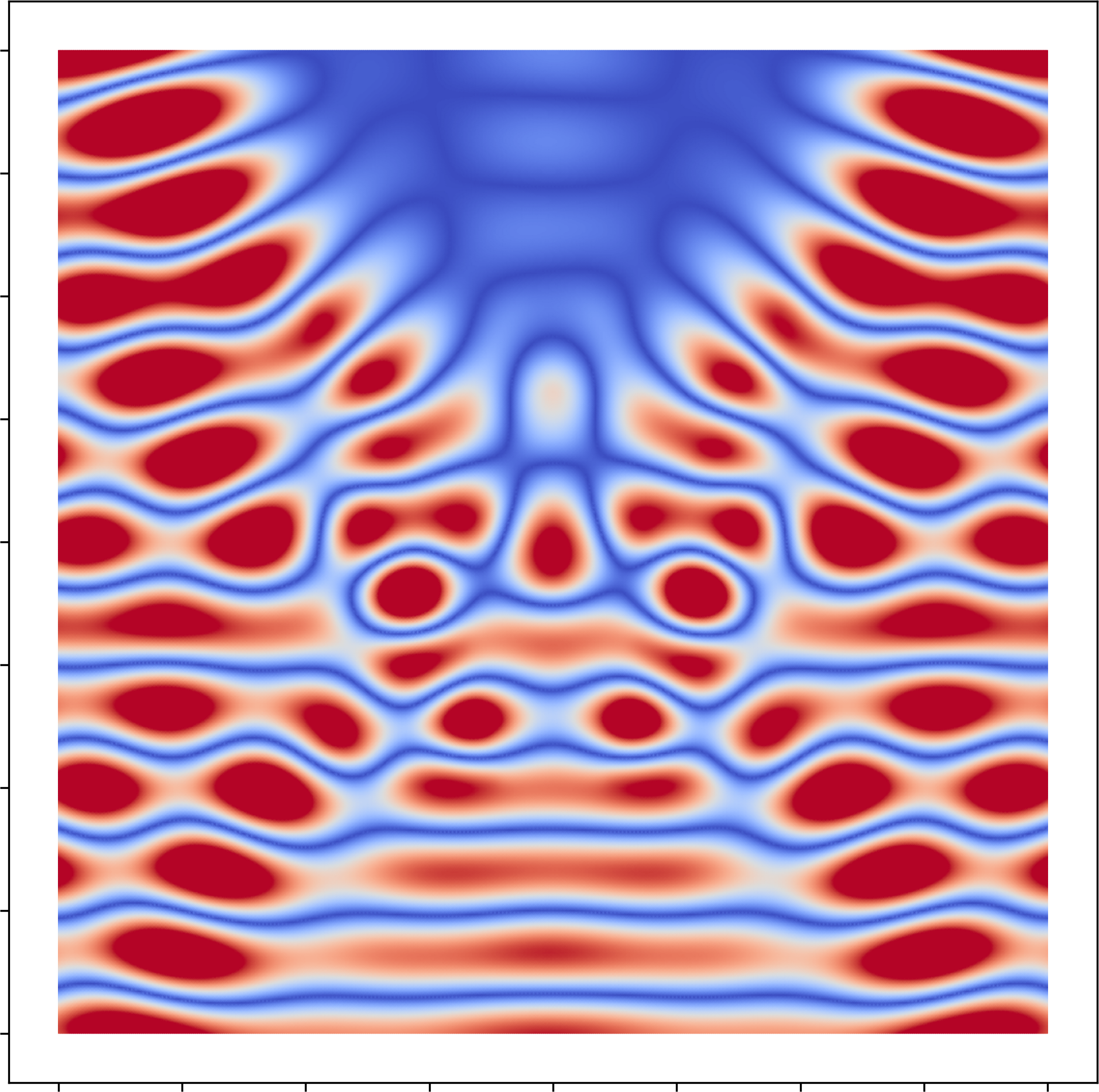}
        \caption{}\label{fig:full}
    \end{subfigure}

    \RawCaption{\caption{
    Computed controls and corresponding moduli of
    the real parts of the solution of the PDE
    constraining \eqref{eq:p_Helmholtz} for $\delta = 0$ and $k = 7$.
    \eqref{fig:img_baseline_uniform}:
    Baseline solution on uniform grid,
    \eqref{fig:primal_heuristic}:
    Solution with primal heuristic on non-uniform grid,
    \eqref{fig:full}:
    Solution with primal heuristic and separators
    on non-uniform grid.
    Note that no baseline solution could be computed on the non-uniform grid
	 within the prescribed time limit of one hour.}
    \label{fig:controls_and_states}}
\end{figure}

\section{Acknowledgment}
C.~Kirches, F.~Bestehorn and P.~Manns acknowledge funding by Deutsche For\-schungsgemeinschaft (DFG) through Priority Programme 1962 (grant Ki 1839/1-2).

\section{Conclusion}
We proposed a non-uniform grid refinement strategy that preserves the approximation properties
of the combinatorial integral approximation. Using these grids inside switching cost aware
rounding on instances of a test problem that is defined on a multi-dimensional domain, we obtained
mixed-integer programs with fewer variables and better tractability, i.e.\ better runtime performance,
when treating them with a general purpose integer programming solver. We were generally
able to improve the solution process of the integer program by adding valid linear inequalities
and applying the efficient combinatorial algorithm that is available for the one-dimensional case as a heuristic
to the multi-dimensional problem.
However, the considered instances can still not be solved to global optimality within acceptable
time limits and large duality gaps remain. Moreover, the visible difference between the
computed solutions are very small.

\bibliographystyle{plain}
\bibliography{references}

\end{document}